\newtheorem{theorem}{Theorem}[section]
\newtheorem{lemma}[theorem]{Lemma}
\newtheorem{proposition}[theorem]{Proposition}
\newtheorem{corollary}[theorem]{Corollary}
\newcommand\bss{{\mathbf s}}
\newcommand\eps{\epsilon}
\newcommand \la{\lambda}
\newcommand \si{\sigma}
\renewcommand{\tilde}{\widetilde}
\renewcommand{\bar}{\overline}
\renewcommand{\le}{\leqslant}
\renewcommand{\ge}{\geqslant}
\newcommand{\sumtwo}[2]{\sum_{\substack{#1 \\ #2}}} 
\newcommand{\nn}{\nonumber}
\newcommand{\dis}{\displaystyle}
\newcommand{\La}{\Lambda}
\numberwithin{equation}{section}
	\definecolor{deepcerise}{rgb}{0.85, 0.2, 0.53}
	\definecolor{darkspringgreen}{rgb}{0.09, 0.45, 0.27}
\definecolor{darkpastelgreen}{rgb}{0.01, 0.75, 0.24}
\definecolor{carmine}{rgb}{0.59, 0.0, 0.09}
\definecolor{caraibbengreen}{rgb}{0.0, 0.8, 0.6}
\definecolor{greyd}{cmyk}{0,0,0,0.4}
\begin{document}

\title{ Interface fluctuations in non equilibrium stationary states: the SOS approximation}

\author{Anna De Masi}
\address{ Anna De Masi \\
Dipartimento di Ingegneria e Scienze dell'Informazione e Matematica, \\
Universit\`a degli studi dell'Aquila, L'Aquila, 67100 Italy }
\email{{\tt demasi@univaq.it}}

\author{Immacolata Merola}
\address{Titti Merola Dipartimento di Ingegneria e Scienze dell'Informazione e Matematica, \\
Universit\`a degli studi dell'Aquila, L'Aquila, 67100 Italy }
\email{{\tt immacolata.merola@univaq.it}}

\author{Stefano Olla}
\address{Stefano Olla: CNRS, CEREMADE\\
  Universit\'e Paris-Dauphine, PSL Research University\\
  75016 Paris, France }
  \email{olla@ceremade.dauphine.fr}


\begin{abstract}
We study the $2d$ stationary fluctuations of the interface in the SOS approximation of the non equilibrium stationary state found in \cite{DOP}. We prove that the interface fluctuations are of order $N^{1/4}$,  $N$ the size of the system. We also prove that the scaling limit is a stationary Ornstein-Uhlenbeck process.
\end{abstract}

\thanks{Dedicated to Joel for his important contributions to the theory of phase transition and interfaces}

\keywords{ Non equilibrium stationary states, Interfaces,  SOS model}
\subjclass[2010]{}

\maketitle


\section{Introduction}
\label{sec:intro}

The  non equilibrium   stationary states (NESS) for diffusive systems in contact with reservoirs have been extensively studied, one of the main targets being to understand how the presence of a current affects what seen in thermal equilibrium. In particular it has been shown that fluctuations in NESS have a non local structure as opposite to what happens in thermal equilibrium. The theory  of such phenomena is well developed, \cite{mft}, \cite{DLS}  but mathematical proofs are restricted to very special systems (SEP, \cite{DLS2}, KMP, \cite{KMP}, chain of oscillators,\cite{BO} ....).  

The general structure of the NESS in the presence of phase transitions is a very difficult and open problem not only mathematically,  also a theoretical understanding is lacking. However  a breakthrough came recently from a paper by De Masi, Olla and Presutti, \cite{DOP}, where they prove that the NESS can be computed explicitly for a quite general class of Ginzburg-Landau stochastic models which include phase transitions.

The main point in \cite{DOP}  is that the NESS is still a Gibbs state but with the original hamiltonian modified by adding a slowly varying chemical potential. Thus for boundary driven Ginzburg-Landau stochastic models the analysis of the NESS is reduced to an equilibrium Gibbsian problem and, at least in principle, very fine properties of their structure can be investigated which is unthinkable for general models.

In particular we can study cases where there are phase transitions and purpose of this paper is to give an indication that the $2d$ NESS interface is much more rigid than in thermal equilibrium.

The analysis in \cite{DOP}  includes a system where the Ising model is coupled to a Ginzburgh-Landau process. In the corresponding NESS the distribution of the Ising spin is a Gibbs measure with the usual nearest neighbour ferromagnetic interaction plus a slowly varying external magnetic field.

In particular in the $2d$ square $\La_N:=[0,N]\times[-N,N]\cap\mathbb Z^2$  the NESS $\mu_N(\si)$   is 
$$\mu_N(\si)=\frac 1{Z_N} e^{-\beta H_N(\si)},\qquad \si=(\si(x)\in\{-1,1\}, x\in\La_N)$$
\begin{eqnarray*}
H_N(\si)  =  H^{\rm ising}(\si)+ \sum_{x\in\La_N} \frac{bx\cdot e_2 } {N}\,\,\si(x),\quad H^{\rm ising}(\si)=  \sumtwo{x,y\in\La_N}{  |x-y|=1} \mathbf 1_{\si(x)\ne \si(y)}\qquad e_2=(0,1)
\end{eqnarray*}
where $b>0$ is fixed by the chemical potentials at the boundaries.

We assume $\beta>\beta_c$, thus
since the
slowly varying external magnetic field $\dis{\frac{bx\cdot e_2} N}$ is positive in the half upper plane and negative  in the half lower plane,
we expect the existence of an interface, namely a connected ``open line" $\la$ in the dual lattice which goes from left to right and which separates the region with the majority of spins equal to 1 to the one with the majority of spins equal to -1.

The problem of the microscopic location of the interface has been much studied in equilibrium without external magnetic field and when the interface is determined by the boundary conditions: $+$ boundary conditions on $\La_N^c \cap \{x\cdot e_2 \ge 0\}$
and $-$ boundary conditions on $\La_N^c \cap \{x\cdot e_2 < 0\}$.

It is well known since the work initiated by Gallavotti, \cite{gal}, that in the $2d$ Ising model at thermal equilibrium  the interface fluctuates by the order of $\sqrt N$,  $N$ the size of the system.

     In this paper we argue that at low temperature (much below the critical value) and in the presence of a stationary current produced by reservoirs at the boundaries the interface is much more rigid as it fluctuates only by the order $N^{1/4}$. 
   
   We study the problem with a drastic simplification by considering the SOS approximation of the interface.
   Namely we consider the simplest case where the interface  $\la$ is a graph, namely $\la$
  is described by a function $s_x$, $x\in \{0,..,N\}$ with integers values in $\mathbb Z$.
The corresponding Ising configurations are spins  equal to -1 below $s_x$ and +1 above $s_x$. Namely 
$\si(x,i)=1$ if $i\ge s_x$ and   $\si(x,i)=-1$ if $i\le s_x$.

The interface is then made by a sequence of horizontal and vertical segments and the  Ising energy  of such configurations is $|\la|$. We normalise the energy by subtracting  the energy of the flat interface so that
the normalised energy is  
	$$\sum_{x=1}^N |s_x-s_{x-1}|=|\la|-N$$
	i.e. the sum of the  lengths of the vertical segments.

The energy due to the external magnetic field is normalised by subtracting the energy of the configuration when all $s_x$ are equal to 0. This is (below we set  $b=1$)
\begin{equation*}
2\sum_{x=0}^N  \sum_{i=1}^{|s_x|}\frac i {N} \approx \sum_{x=0}^N\frac {s_x^2}{N}
	\end{equation*}
Thus we get the  SOS Hamiltonian 
\begin{equation}
\label{ham}
 H_N(s)=\frac 1{N}\sum_{x=0}^{N}s_x^2+ \sum_{x=1}^{N}|s_x-s_{x-1}|
 \end{equation}

We prove that the stationary fluctuations of the interface in this SOS approximation  scaled by $N^{1/4}$  convergence to a stationary Ornstein-Unhlenbeck process. 

The problem addressed in this article is the behavior of the interface in the NESS and the aim is to argue that its fluctuations are more rigid than in thermal equilibrium as indicated by the SOS approximation. Thus  in the SOS approximation we prove the $N^{1/4}$ behavior in the simplest setting of Section \ref{sec:1}.

More general results similar to those in \cite{ISV} presumably apply.
We cannot use directly the results in \cite{ISV} because
their SOS models have an additional constraint
(the interface is in the upper half plane).
Our proofs have several points in common with \cite{ISV},
but since we work in a more specific setup with less constrains, they are
considerably simpler and somehow  more intuitive.

\section{Model and results}
\label{sec:1}

We consider $\La_N= \{0,...,N\}\times \mathbb Z$ and denote the configuration of the  interface with
$\bss = \{s_x\in \mathbb Z, x= 0,\dots , N\}$.
The interface increments are denoted by
$\eta_x=s_x-s_{x-1} \in \mathbb Z, x= 1, \dots, N$.

Let $\pi$ a symmetric probability distribution on $\mathbb Z$ aperiodic and
such that \\
	\begin{equation}
	\label{pi}
	\sum_{\eta\in\mathbb Z} e^{a\eta} \pi(\eta)< +\infty
\quad \forall |a|\le a_0, \text{ for some  $a_0>0$}
	\end{equation}
We denote $\sigma^2$ the variance of $\pi$ and as we shall see the result does not depend on the particular choice of $\pi$
but only on the variance $\sigma^2$.


\bigskip
For $s,  \bar s \in\mathbb Z$ define the positive kernel
	\begin{equation}
	\label{1.1a}
        T_N(s, \bar s)=e^{-\frac { s^2+\bar s^2}{2N}}  \pi(s-\bar s).
	\end{equation}
        Call $T_Nf(s)$ the integral operator with kernel $T_N$.
    $T_N$ is a symmetric positive operator in $\ell_2(\mathbb Z)$,
        and it can be checked immediately that it is Hilbert-Schmidt, consequently compact.
        Then the Krein-Rutman theorem \cite{KR} applies,
        thus there is a strictly positive eigenfunction $h_N\in \ell_2(\mathbb Z)$
        and a strictly positive eigenvalue $\la_N>0$:
        \begin{equation}
	\label{1.3}
\sum_{ s'} T_N(s, s')h_N( s')=\la_Nh_N(s), \qquad \sum_{s} h_N^2(s) = 1,
	\end{equation}
        The eigenvalue $\la_N<1$, and  $\la_N \to 1$ as $N\to\infty$, see Theorem \ref{th1}. 

\bigskip
We then observe that the Gibbs distribution  $\nu_N$  with the hamiltonian  given in \eqref{ham} and 
with the values at the boundaries distributed according to the measure $h_N(s)e^{\frac {s^2}{2N}}$  can be expressed in terms of the kernel $T_N$ and the  double-geometric distribution
$$\dis{\pi(\eta) = \frac 1V e^{-|\eta|}} \qquad V=\sum_\eta e^{-|\eta|}$$
In fact
\begin{eqnarray}
	\nn
        \nu_N(\bss) &=& \frac 1{Z_N} h(s_0)e^{\frac {s_0^2}{2N}} e^{-  \frac 1N\sum_{x=0}^N s_x^2}\prod_{x=1}^N \frac {e^{-|s_x-s_{x-1}|}}{V} h(s_N)e^{\frac {s_N^2}{2N}} 
   \\&=&     \frac 1{Z_N}  h_N(s_0)\,e^{-\frac 1{2N}\sum_{x=1}^N (s_x^2+s_{x-1}^2)} \prod_{x=1}^N \pi(\eta_x)h_N(s_N)\\&=&   \frac 1{Z_N} h_N(s_0)\prod_{x=1}^N T_N(s_{x-1}, s_x)h_N(s_N)\label{2.3a}
\end{eqnarray}
with $Z_N$ the partition function.

Call
		\begin{equation}
	\label{2.4a}
p_N(s,s'):=\frac {h_N(s')}{\la_N h_N(s)}T_N(s,s')	
\end{equation}
$p_N$ defines an irreducible positive-recurrent Markov chain on $\mathbb Z$ with reversible measure given by $h_N^2(s)$.
We call $ P_N$ the law of the Markov chain starting from the invariant measure $h_N^2(s)$.

Observe that  $\nu_N(\bss)$ in  \eqref{2.3a} is the $ P_N$- probability  of the trajectory $\bss$, indeed from \eqref{2.4a} we get
	\begin{eqnarray}
	\nn
     \nu_N(\bss) =\frac 1{Z_N} h_N(s_0)\prod_{x=1}^N T_N(s_{x-1}, s_x)h_N(s_N) =\frac {\la_N^N}{Z_N} h^2_N(s_0)\prod_{x=1}^N p_N(s_{x-1},s_x)\\
     \label{2.5a}
     \end{eqnarray}
which proves that $Z_N=\la^N_N$ and that $  \nu(\bss) = P_N(\bss)$.

We define the rescaled variables
	$$\tilde S^N(t) = \frac{s_{[tN^{1/2}]}}{N^{1/4}},\qquad t=0,1,..,N^{1/2},\quad  [] =  \text{ integer part}$$
then $\tilde S^N(t) $ is extended to $t\in[0,1]$ by linear interpolation, in this way we can consider the induced distribution $\mathcal P_N$ on the space of continuous function $C([0,1])$. We denote by $\mathcal E_N$ the expectation with respect to $\mathcal P_N$.

Our main result is the following Theorem.

\begin{theorem}\label{main}
 The process $\{\tilde S^N(t), t\in [0,1]\}$  converges in law to the stationary Ornstein-Uhlenbeck process with variance $\sigma/2$. Moreover $\dis{\lim_{N\to\infty} \la_N^{\sqrt N}= e^{-\si/2}}$.

\end{theorem}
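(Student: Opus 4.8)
By \eqref{2.5a}, $\nu_N=P_N$ is the law of the $h_N^2$-reversible Markov chain $p_N$ of \eqref{2.4a} started from $h_N^2$, and the marginal of $(s_0,\dots,s_{[N^{1/2}]})$ is again this chain run for $[N^{1/2}]$ steps from $h_N^2$; so $\{\tilde S^N(t)\}$ is stationary and the theorem reduces to an $N^{-1/2}$-precise asymptotic for $\la_N$, convergence of the rescaled marginal $h_N^2$, and a diffusive scaling limit of the one-step transition. The plan is to get the first two from a semiclassical analysis of $T_N$, to prove tightness of $\{\mathcal P_N\}$ from uniform increment bounds, and to identify the limit via a martingale problem. \emph{Semiclassical step:} with $s=N^{1/4}u$, for $\phi\in C_c^2(\R)$ and $f(s)=\phi(N^{-1/4}s)$, Taylor-expanding $\phi$, the confinement factor $e^{-(s^2+s'^2)/(2N)}$ and (after conjugation) $h_N$, and using that by \eqref{pi} all moments of $\pi$ are finite, with mean $0$ and variance $\sigma^2$, one gets, locally uniformly in $u$,
\begin{equation*}
N^{1/2}\Big(\sum_{s'}T_N(s,s')f(s')-f(s)\Big)=(\mathcal A\phi)(N^{-1/4}s)+O(N^{-1/2}),\qquad \mathcal A:=\tfrac{\sigma^2}{2}\,\partial_u^2-u^2 ,
\end{equation*}
so that $N^{1/2}(T_N-\mathrm{Id})$ is a lattice version of the harmonic oscillator $\mathcal A$ on $L^2(\R)$, with centred Gaussian ground state $\psi_0$ and simple principal eigenvalue $-\mu$, $\mu$ the constant of Theorem \ref{main}. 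Turning this into rigorous statements about $(\la_N,h_N)$ --- which is Theorem \ref{th1}, invoked here --- gives $\la_N=1-\mu N^{-1/2}+o(N^{-1/2})$, hence $\la_N^{\sqrt N}=e^{\sqrt N\log\la_N}\to e^{-\mu}$ (the second assertion), together with $N^{1/8}h_N(N^{1/4}\cdot)\to\psi_0$ in $L^2$ and locally uniformly, an $O(N^{-1/2})$ spectral gap for $p_N$, and Gaussian two-sided bounds on $h_N$ in the variable $s/N^{1/4}$ uniform in $N$; in particular the law of $\tilde S^N(0)$ under $\mathcal P_N$ tends to $\rho_\infty:=\psi_0^2$, a centred Gaussian of the stated variance.

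\emph{Tightness.} By stationarity it is enough to control increments of $s_\cdot$ over $m\simeq(t-s)N^{1/2}$ steps. Along $p_N$ the step $s_{k+1}-s_k$ has conditional law $\pi(\eta)$ reweighted by $e^{-(2s_k\eta+\eta^2)/(2N)}\,h_N(s_k+\eta)/h_N(s_k)$, so \eqref{pi} together with the uniform Gaussian bounds on $h_N$ keeps the conditional exponential moments $\E[e^{a(s_{k+1}-s_k)}\mid\mathcal F_k]$ bounded uniformly in $N,k$ for small $a$, whence $\E[(s_{k+m}-s_k)^4\mid\mathcal F_k]\le C m^2$. Therefore $\mathcal E_N\big[|\tilde S^N(t)-\tilde S^N(s)|^4\big]\le C(t-s)^2$, and Kolmogorov--Chentsov gives tightness of $\{\mathcal P_N\}$ in $C([0,1])$.

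\emph{Identification.} For $\phi\in C_c^\infty(\R)$ put $\tilde\phi(s)=\phi(N^{-1/4}s)$; then $\phi(\tilde S^N(kN^{-1/2}))-\phi(\tilde S^N(0))-\sum_{j=1}^{k}(\mathcal L_N\tilde\phi)(s_{j-1})$, with $\mathcal L_N g(s)=\sum_{s'}p_N(s,s')(g(s')-g(s))$, is a $P_N$-martingale. Expanding $p_N$ via \eqref{2.4a} as above and using $N^{1/8}h_N(N^{1/4}\cdot)\to\psi_0$ to capture the $O(N^{-1/4})$-per-step drift coming from $h_N(s+\eta)/h_N(s)$, one finds, locally uniformly,
\begin{equation*}
N^{1/2}\,(\mathcal L_N\tilde\phi)(N^{1/4}u)\longrightarrow (\mathcal L\phi)(u):=\tfrac{\sigma^2}{2}\,\phi''(u)+\sigma^2\,\frac{\psi_0'(u)}{\psi_0(u)}\,\phi'(u),
\end{equation*}
which, $\psi_0$ being Gaussian, is the generator of an Ornstein--Uhlenbeck process with invariant law $\rho_\infty=\psi_0^2$. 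Passing to the continuum --- $\sum_{j\le[tN^{1/2}]}(\mathcal L_N\tilde\phi)(s_{j-1})$ is a Riemann sum for $\int_0^t(\mathcal L\phi)(\tilde S^N(r))\,dr$, the conditional quadratic variation converges to $\int_0^t\sigma^2\,(\phi'(\tilde S^N(r)))^2\,dr$, and large jumps are negligible by \eqref{pi} --- shows that every subsequential limit of $\{\mathcal P_N\}$ solves the well-posed martingale problem for $\mathcal L$ started from $\rho_\infty$, whose unique solution is the stationary Ornstein--Uhlenbeck process. With tightness this gives $\mathcal P_N\to$ that law, the first assertion of Theorem \ref{main}.

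\emph{Main obstacle.} The heart of the matter is the semiclassical step: the $N^{-1/2}$-sharp expansion of $\la_N$ and, above all, the convergence of the rescaled $h_N$ with enough uniformity --- two-sided Gaussian bounds, not merely pointwise convergence --- to feed both the tightness estimate and the drift term above. This amounts to a perturbation of the $\pi$-convolution operator --- whose spectrum tops out at $1$, attained on constants, with a band below --- by the slowly varying confinement $e^{-s^2/N}$; the required bounds can be obtained either by a direct comparison argument in the spirit of \cite{ISV} (simpler here, as there is no positivity constraint on the interface), or by a Birman--Schwinger/semiclassical analysis.
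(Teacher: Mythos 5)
Your overall architecture (stationarity from \eqref{2.5a}, tightness plus identification of the finite-dimensional limits, ground-state transform of a killed random walk producing an Ornstein--Uhlenbeck limit) is the right one and partly coincides with the paper's. But there is a genuine gap: each of your three steps consumes properties of $(\la_N,h_N)$ that are neither proved in the proposal nor supplied by Theorem \ref{th1}, which you invoke for them. Theorem \ref{th1} gives only $1-c/\sqrt N\le\la_N<1$ and $\|h_N\|_\infty\le 1$; it does not give the sharp expansion $\la_N=1-\mu N^{-1/2}+o(N^{-1/2})$, nor locally uniform convergence of $N^{1/8}h_N(N^{1/4}\cdot)$, nor two-sided Gaussian bounds, nor a spectral gap. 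These are the heart of the matter --- as you yourself say under ``main obstacle'' --- and deferring them to ``a comparison argument in the spirit of \cite{ISV} or Birman--Schwinger'' leaves the proof unwritten. The paper's route is designed precisely to avoid most of them: it proves only the Gaussian \emph{upper} bound \eqref{eq:17} (Feynman--Kac plus an exponential-martingale estimate for the free walk), gets compactness of $\tilde h_N$ in $L^2$ from the Dirichlet-form bound \eqref{eq:2} and Kolmogorov--Riesz, identifies the limit through the fixed-point equation \eqref{3.2} solved explicitly by a Girsanov computation (which is also where $\lim_N\la_N^{\sqrt N}$ is extracted a posteriori, rather than from a sharp one-step expansion of $T_N$), and obtains the finite-dimensional distributions by iterating the kernel representation \eqref{4.12} --- never needing pointwise or derivative control of $h_N$.

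Two of your steps would fail even if two-sided Gaussian bounds were granted. First, tightness: to bound $\E[e^{a(s_{k+1}-s_k)}\mid\mathcal F_k]$ you must control the ratio $h_N(s+\eta)/h_N(s)$ uniformly in $s$, and two-sided bounds $c_1e^{-C_1s^2/\sqrt N}\le N^{1/8}h_N(s)\le c_2e^{-C_2s^2/\sqrt N}$ with $C_1>C_2$ give a ratio bound that blows up as $|s|\to\infty$; one would need a Harnack-type estimate with matching constants, which is not available. The paper's Proposition \ref{tig} sidesteps the ratio entirely: it drops the potential (bounded by $1$), uses the free-walk tail estimate \eqref{eq:12} together with $\sum_sh_N(s)\le N^{1/8}$, and settles for the exponent $t^{3/2}$ (sufficient for Billingsley's criterion) instead of your $t^2$. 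Second, identification: the drift $\si^2\psi_0'/\psi_0$ in your limiting generator requires convergence of the discrete logarithmic derivative $N^{1/4}\bigl(h_N(s+1)-h_N(s)\bigr)/h_N(s)$, which is strictly stronger than $L^2$ or locally uniform convergence of $h_N$ itself and is established nowhere; the paper's Feynman--Kac identification needs no such regularity. Finally, a bookkeeping point: your oscillator $\frac{\si^2}2\partial_u^2-u^2$ has ground-state energy $\si/\sqrt2$, not the $\si/2$ you then assign to $\mu$; the constant $e^{-\si/2}$ corresponds to the potential $u^2/2$. (The paper is itself loose about this factor between \eqref{1.1a} and \eqref{2.1}, but your two displayed claims are mutually inconsistent as written and should be reconciled.)
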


The paper is organized as follows: in Section \ref{sec2} we give a priori estimates on the eigenfunctions $h_N$ and on the eigenvalues $\la_N$, in Section \ref{sec3} we prove convergence of the eigenfunctions $h_N$ and identify the limit, in Section \ref{sec:4} we prove Theorem \ref{main}.

\bigskip
\section{Estimates on the eigenfunctions and the eigenvalues}

\label{sec2}



\begin{theorem}
\label{th1}
The operator $T_N$ defined in \eqref{1.1a}  has a maximal positive eigenvalue $\la_N$ and a positive
 normalized eigenvector $h_N(s)\in \ell^{2}(\mathbb Z)$ as in \eqref{1.3}
with the following properties:
	\begin{enumerate}
	\item [(i)]  $h_N$ is a symmetric function.
        \item [(ii)]  $\|h_N\|_\infty\le 1$ for all $N$. 
\item [(iii)] There exists $c$ so that $\dis{1- \frac{c}{\sqrt N} \le \la_N < 1}$.
	\end{enumerate}
      \end{theorem}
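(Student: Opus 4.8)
The plan is to exploit the variational characterization $\la_N = \sup_{\|f\|_2=1} \langle f, T_N f\rangle$ together with the Krein–Rutman structure already established in the excerpt. For (i), symmetry of $h_N$: the kernel $T_N(s,\bar s) = e^{-(s^2+\bar s^2)/2N}\pi(s-\bar s)$ is invariant under $(s,\bar s)\mapsto(-s,-\bar s)$ because $\pi$ is symmetric. Hence if $h_N$ is the (unique up to sign, by simplicity of the Krein–Rutman eigenvalue) positive normalized eigenfunction, then $s\mapsto h_N(-s)$ is also a positive normalized eigenfunction for the same eigenvalue $\la_N$; by uniqueness they coincide. For (iii) the upper bound $\la_N<1$: I would write, for any unit vector $f$,
\begin{equation*}
\langle f, T_N f\rangle = \sum_{s,\bar s} e^{-\frac{s^2+\bar s^2}{2N}} \pi(s-\bar s) f(s) f(\bar s) \le \sum_{s,\bar s} e^{-\frac{s^2+\bar s^2}{2N}} \pi(s-\bar s)\, \tfrac12\big(f(s)^2+f(\bar s)^2\big),
\end{equation*}
and after symmetrizing and summing the free variable against $\pi$ this is $\le \sum_s e^{-s^2/2N}\big(\sum_{\bar s} e^{-\bar s^2/2N}\pi(s-\bar s)\big) f(s)^2 < \sum_s f(s)^2 = 1$, the strict inequality coming from $e^{-s^2/2N}<1$ for $s\neq 0$ (and $\pi$ having full support by aperiodicity, so the inner factor is strictly below $1$ for every $s$). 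Taking the sup over $f$ gives $\la_N<1$.

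For (ii), the bound $\|h_N\|_\infty \le 1$: evaluate the eigenvalue equation at the point $s^\star$ where $h_N$ attains its maximum. Then
\begin{equation*}
\la_N h_N(s^\star) = \sum_{\bar s} T_N(s^\star,\bar s) h_N(\bar s) \le h_N(s^\star) \sum_{\bar s} e^{-\frac{(s^\star)^2+\bar s^2}{2N}}\pi(s^\star-\bar s) \le h_N(s^\star)\sum_{\bar s}\pi(s^\star - \bar s) = h_N(s^\star),
\end{equation*}
which only re-proves $\la_N\le 1$. To get the sup-norm bound I instead use $h_N\in\ell^2$ with $\|h_N\|_2=1$: from the eigenvalue equation and Cauchy–Schwarz, $\la_N h_N(s) = \sum_{\bar s} T_N(s,\bar s) h_N(\bar s) \le \|T_N(s,\cdot)\|_2 \,\|h_N\|_2 = \|T_N(s,\cdot)\|_2$, and one checks $\|T_N(s,\cdot)\|_2^2 = \sum_{\bar s} e^{-\frac{s^2+\bar s^2}{N}}\pi(s-\bar s)^2 \le \sum_{\bar s}\pi(s-\bar s)^2 \le \|\pi\|_\infty \le 1$ (since $\pi$ is a probability on $\mathbb Z$, $\pi(\eta)\le 1$, so $\sum_\eta \pi(\eta)^2 \le 1$). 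Combined with $\la_N$ bounded below (the next step), this gives $\|h_N\|_\infty \le \la_N^{-1}\|\pi\|_\infty^{1/2}$, which is $\le 1 + o(1)$; a slightly more careful estimate keeping the factor $e^{-s^2/N}$ and using that $h_N$ is concentrated near $0$ should close the gap to the clean bound $\|h_N\|_\infty\le 1$ — alternatively one can simply state the bound with an $N$-independent constant, which is all that is needed downstream.

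The lower bound in (iii), $\la_N \ge 1 - c/\sqrt N$, is the main obstacle and requires producing an explicit test function. The natural choice is a discretized Gaussian $g(s) = Z^{-1/2} e^{-s^2/(2\alpha\sqrt N)}$ for a suitable constant $\alpha$, normalized in $\ell^2$. Plugging into the Rayleigh quotient, $\langle g, T_N g\rangle = \sum_{s,\bar s} e^{-\frac{s^2+\bar s^2}{2N}} \pi(s-\bar s) g(s) g(\bar s)$; writing $\bar s = s - \eta$ and Taylor-expanding the smooth Gaussian and exponential factors in the slowly-varying variable $s$ (of typical size $N^{1/4}$) and treating $\eta$ as an $O(1)$ fluctuation against $\pi$, one expects $\langle g, T_N g\rangle = 1 - \frac{c'}{\sqrt N} + o(N^{-1/2})$, where the $N^{-1/2}$ correction comes from balancing the entropic cost $\sum \eta^2\pi(\eta) = \sigma^2$ of the increments against the quadratic potential $\sum s^2/N$ — indeed choosing $\alpha$ to optimize this trade-off is exactly what will later pin down the Ornstein–Uhlenbeck variance $\sigma/2$ and the limit $\la_N^{\sqrt N}\to e^{-\sigma/2}$. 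The delicate points are: (a) controlling the tails of the Gaussian sum so that the Euler–Maclaurin/Riemann-sum approximation of $\sum_s$ by $\int ds$ is valid with error $o(N^{-1/2})$; (b) using the exponential moment assumption \eqref{pi} to justify exchanging summation and Taylor expansion in $\eta$; and (c) keeping careful track of the boundary/normalization constants $Z$. Since we only need a lower bound on the \emph{maximal} eigenvalue, $\la_N \ge \langle g, T_N g\rangle$ suffices and no spectral-gap information is required at this stage.
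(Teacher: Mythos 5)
Your overall strategy coincides with the paper's: symmetry of the kernel for (i), the variational characterization for both bounds in (iii), and a Gaussian test function at scale $N^{1/4}$ for the lower bound. But two points need attention.

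First, for (ii) you overlook the one-line argument and end up with a weaker statement. On $\ell^2(\mathbb Z)$ with counting measure one has directly
\begin{equation*}
  \|h_N\|_\infty^2=\sup_s h_N(s)^2\le \sum_s h_N(s)^2=1,
\end{equation*}
which is exactly how the paper proves (ii). Your Cauchy--Schwarz route only yields $h_N(s)\le \la_N^{-1}$, i.e.\ $\|h_N\|_\infty\le 1+O(N^{-1/2})$, and you acknowledge you cannot close the gap to the clean bound $\le 1$; the fix is the trivial $\ell^\infty\le\ell^2$ comparison above. (A side remark on your strictness argument for $\la_N<1$: aperiodicity of $\pi$ does \emph{not} give full support, only that the support generates $\mathbb Z$; the strict inequality still holds because $\pi$ is not a point mass, so for every $s$ there is some $\bar s\ne 0$ with $\pi(s-\bar s)>0$, making the inner sum strictly less than $1$.)

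Second, and more substantially, the lower bound $\la_N\ge 1-c/\sqrt N$ is only sketched: you name the right test function $g(s)=Z^{-1/2}e^{-s^2/(2\alpha\sqrt N)}$ and the correct heuristic (balancing $\sigma^2$ against the quadratic potential), but the items you defer as ``delicate points'' (a)--(c) are precisely the content of the proof. The paper executes this without any Taylor-expansion error control: it uses the elementary inequality $e^{-x}\ge 1-x$ twice --- once to remove the potential factor, giving $\la_N\ge \sum\pi(s-\bar s)h(s)h(\bar s)-\tfrac1N\sum s^2h(s)^2-\tfrac{\sigma^2}{2N}$, and once inside $\sum_{s,\tau}h_\alpha(s)^2\pi(\tau)e^{-\alpha\tau^2/4}e^{-\alpha s\tau/2}$, where the cross term $\alpha s\tau/2$ vanishes by the parity of $h_\alpha$ and $\pi$, yielding $\ge 1-\alpha\sigma^2/4$ exactly. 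Together with $\sum_s s^2h_\alpha(s)^2=\alpha^{-1}+O(\alpha)$ and the choice $\alpha=N^{-1/2}$ this gives the claim with no Euler--Maclaurin or exchange-of-limits issues to worry about. You should either carry out your expansion with explicit remainder control or adopt this pointwise-inequality route, which is both shorter and rigorous.
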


\noindent {\bf Proof.}
That $h_N(s)$ is positive follows by the Krein-Rutman theorem, \cite{KR},
also $\la_N$ is not degenerate,
its eigenspace is one-dimensional.
The symmetry follows from the symmetry of $T_N$, since $h_N(-s)$ is also
eigenfunction for $\la_N$.

The $\ell_\infty$  bound follows from
\begin{equation}
  \label{eq:7}
  \|h_N\|^2_\infty = \sup_{s}\; h_N(s)^2 \le \sum_{s}\; h_N(s)^2 = 1.
\end{equation}

  
The upper bound in (iii) easily follows from
\begin{eqnarray*}
&& \hskip-1cm
\la_N \le \sum_{s,\bar s}  \pi(s- \bar s) h_N(s) h_N(\bar s)
\le     \frac 1{2} \sum_{s,\bar s}  \pi(s- \bar s) \left( h_N(s)^2 +  h_N(\bar s)^2\right) \le 1
	\end{eqnarray*}
having used that $\sum_{s} h_N(s)^2 = 1$.

To prove the lower bound in (iii) we use the variational formula 
\begin{equation}
  \label{eq:9}
  \begin{split}
    \la_N = \sup_{h} \frac{\sum_{s,s'} T_N(s, s') h(s) h(s')}{ \sum_{s} h(s)^2}
  \end{split}
\end{equation}
By
choosing $h$ with $\sum_{s} h(s)^2 = 1$, and
using the inequality
$e^{-x} \ge 1 - x$, we have a lower bound
\begin{equation}
  \label{eq:10}
  \begin{split}
  \la_N 
  \ge  \sum_{s,\bar s} \pi(s- \bar s) h(s) h(\bar s)
  - \frac 1{N} \sum_{s,\bar s} s^2\pi(s- \bar s)  h(s) h(\bar s)
\end{split}
\end{equation}
Observe that, since $\sum_{s} h(s)^2 = 1$,
  	\begin{eqnarray*}
&& \hskip-1cm
\frac 1{N} \sum_{s,\bar s} s^2 \pi(s- \bar s) h(s) h(\bar s)
\le     \frac 1{2N} \sum_{s,\bar s} s^2 \pi(s- \bar s) \left( h(s)^2 +  h(\bar s)^2\right)
\hskip.4cm\\&&
\le
     \frac 1{2N} \sum_{s} s^2  h(s)^2 +
      \frac 1{2N} \sum_{\eta,\bar s} (\bar s+\eta)^2  \pi(\eta)  h(\bar s)^2
\\&& =
\frac 1{N}\sum_{s} s^2 h(s)^2 +\frac{\sigma^2}{2N}
	\end{eqnarray*}
Thus \begin{equation}
  \label{eq:10a}
  \la_N \ge
  \sum_{s,\bar s} \pi(s-\bar s) h(s) h(\bar s) - \frac 1{N}
  \sum_{s} s^2 h(s)^2
  - \frac{\sigma^2}{2N}
  \end{equation}
 For $\alpha>0$, we choose  $h(s) = h_\alpha(s) := C_{\alpha}$  $ e^{- \alpha s^2/4}$,
with $C_\alpha =  \left(\sum_s e^{-\alpha s^2/2}\right)^{-1/2} $. Observe that for $\alpha\to 0$
\begin{equation*}
	\Big|\sqrt\alpha \sum_s e^{-\alpha s^2/2}-\int e^{-r^2/2}dr\Big|\le C\alpha \qquad
\big|\sqrt\alpha \sum_s (\alpha s^2 e^{-\alpha s^2/2}-\int r^2e^{-r^2/2}dr\Big|\le C\alpha	\end{equation*}
Thus
\begin{equation}
  \label{eq:12a}
  \sum_{s} s^2 h_\alpha(s)^2 = \alpha^{-1} + O(\alpha)\qquad \text{as}\quad \alpha\to 0.
\end{equation}
We next prove that
\begin{equation}
  \label{eq:11}
   \sum_{s,s'} \pi(s- s') h_\alpha(s) h_\alpha(s')\ \ge 1-\frac{\alpha \si^2}{4}
\end{equation}
To prove \eqref{eq:11} observe that
$h_\alpha(s) h_\alpha(s+\tau) = h_\alpha(s)^2 e^{-\alpha\tau^2/4 - \alpha s \tau/2}$, then
	\begin{eqnarray*}
&&  \sum_{s,s'} \pi(s- s')  h_\alpha(s) h_\alpha(s') =
  \sum_s  h_\alpha(s)  \sum_\tau \pi(\tau) h_\alpha(s+ \tau)
  \\&& =
  \sum_s h_\alpha(s)^2 \sum_\tau \pi(\tau) e^{-\alpha \tau^2/4} e^{-\alpha s\tau/2}
	\end{eqnarray*}
        Using again that $e^{-z} \ge 1 - z$  and the parity of $h_\alpha$ and of $\pi$
  we get	
\begin{eqnarray*}
  &&  \sum_s h_\alpha(s)^2 \sum_\tau \pi(\tau) e^{-\alpha \tau^2/4} e^{-\alpha s\tau/2}\\
   &&  \ge   \sum_s  h_\alpha(s)^2  \sum_\tau \pi(\tau)
     \left(1 - \frac{\alpha}{4} \tau^2\right)\left(1 -\frac{\alpha s\tau}{2}\right)
     = 1-\frac{\alpha \si^2}{4}
   	\end{eqnarray*}
which proves
\eqref{eq:11}.

We choose $\alpha = N^{-1/2}$ and from \eqref{eq:10a}, \eqref{eq:12a} and \eqref{eq:11} we then get
\begin{equation}
  \label{eq:13a}
  \la_N \ge 1-\frac 1{\sqrt N} \left(\frac{\si^2}4 + 
  1\right) -
\frac{\sigma^2}{2N} + O(N^{-3/2}),
\end{equation}
which gives the lower bound. \qed

\bigskip

%

Given $s$ let $s_x$ be the position at $x$ of the random walk starting at $s$, namely $\dis{s_x=s+\sum_{k=1}^x \eta_k}$
where $\{\eta_k\}_k$ are i.i.d. random variables with distribution $\pi$.
By an abuse of notation we will denote by $\pi$ also the probability distribution of the
trajectories of the corresponding random walk and by $\mathbb E_{s}$  the expectation with respect to the law of the random walk which starts from $s$.

We will use the local central limit theorem as stated in Theorem (2.1.1) in \cite{Lawler} (see in particular formula (2.5)). There exists a constant $c$ not depending on $n$ such that for any $s$:
\begin{equation}\label{25-L}
  |\pi(\sum_{k=1}^n {\eta_k=s)- \bar{p}(\sum_{k=1}^n \eta_k=s)|\le \frac{c}{n^{3/2}}}
\end{equation}
where 
$$\bar p(\sum_{k=1}^n \eta_k=s))= \frac 1{\sqrt{2\pi\si^2n}}e^{-\frac{s^2}{2\si^2n}}$$

\bigskip
By iterating \eqref{1.3}  $n$ times we get
	\begin{equation}
	\label{2.1}
 h_N( s)=\frac 1{\la_N^n} \mathbb E_{s}( e^{-\frac 1{2N}\sum_{x=0}^n s_x^2}\,\,h_N(s_n))
	\end{equation}

\begin{theorem}
  \label{teo2}
  There exist positive constants $c,C$ (independent of $N$) such that
  \begin{equation}
    \label{eq:17}
    h_N(s) \le { \frac C{N^{1/8}}}\,\, \exp\big \{-\dis{ \frac{c  s^2}{N^{1/2}}}\big\}
  \end{equation}
\end{theorem}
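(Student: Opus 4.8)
The plan is to exploit the iteration formula \eqref{2.1} together with the local central limit theorem \eqref{25-L}, choosing the number of iterations $n$ to balance the Gaussian spreading of the random walk against the decay produced by the potential factor $e^{-\frac1{2N}\sum_{x=0}^n s_x^2}$. First I would pick $n\sim N^{1/2}$ (the natural scale appearing in Theorem \ref{main} and in part (iii) of Theorem \ref{th1}). With this choice, using $\la_N^{-n}\le (1-c/\sqrt N)^{-n}\le C$ from (iii), formula \eqref{2.1} gives
\begin{equation*}
 h_N(s)\le C\,\mathbb E_s\!\left(e^{-\frac1{2N}\sum_{x=0}^n s_x^2}\,h_N(s_n)\right)\le C\,\|h_N\|_\infty\,\mathbb E_s\!\left(e^{-\frac1{2N}\sum_{x=0}^n s_x^2}\right),
\end{equation*}
and by (ii) we may drop $\|h_N\|_\infty\le1$. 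So the task reduces to bounding the pure random-walk quantity $\mathbb E_s\big(e^{-\frac1{2N}\sum_{x=0}^n s_x^2}\big)$ by $C N^{-1/8}e^{-cs^2/N^{1/2}}$.

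For the Gaussian factor $e^{-cs^2/N^{1/2}}$: I would bound $\sum_{x=0}^n s_x^2$ from below by keeping only a constant fraction of the terms, say those with $x\ge n/2$, and argue that for such $x$ the walk $s_x$ stays of order $s$ with non-vanishing probability when $s$ is large compared to the walk's typical displacement $\sqrt n\sim N^{1/4}$. More cleanly: on the event that $|\eta_k|$ are all small (probability bounded below, by the exponential moment bound \eqref{pi}) the walk satisfies $|s_x-s|\le |s|/2$ for all $x\le n$ as soon as, say, $|s|\ge 4\sqrt n\log n$ or so — but to get the clean $e^{-cs^2/\sqrt N}$ without log corrections I would instead split: if $|s|\le N^{1/4}$ the claimed bound is trivial (the exponential is $\ge e^{-c}$), and if $|s|>N^{1/4}$ I would write $\sum_{x=0}^n s_x^2\ge \sum_{x=0}^n (|s|-|s_x-s|)_+^2$ and use a maximal inequality / reflection bound $\pi_s\big(\max_{x\le n}|s_x-s|>|s|/2\big)\le C e^{-c s^2/n}$ (Doob plus the exponential-moment Chernoff bound for the walk), so that on the complementary event $\sum_{x=0}^n s_x^2\ge (n/4)\cdot s^2/4$, giving $\mathbb E_s(\dots)\le e^{-c n s^2/N}+Ce^{-cs^2/n}\le Ce^{-cs^2/\sqrt N}$ since $n\sim\sqrt N$. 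For the $N^{-1/8}$ gain: this must come from the case $s=0$ (where there is no Gaussian decay to give), and it reflects the fact that $e^{-\frac1{2N}\sum_0^n s_x^2}$ is already small in expectation because a typical walk of length $n\sim\sqrt N$ reaches height $\sim N^{1/4}$, making $\frac1{2N}\sum_0^n s_x^2\sim \frac1N\cdot n\cdot n = 1$ of order one. To turn "order one" into a genuine decay $N^{-1/8}$ I would iterate once more, or rather split $[0,n]$ into $\sim N^{1/8}$ blocks of length $n/N^{1/8}$ each, and on each block argue (via the LCLT \eqref{25-L} applied to the block endpoints, or via a small-ball / anticoncentration estimate) that the walk's height is of order $(n/N^{1/8})^{1/2}\cdot(\text{something})$ with enough independence between blocks that the product of block contributions gives $(1-c)^{N^{1/8}}$-type decay — more carefully, one shows $\mathbb E_0\big(e^{-\frac1{2N}\sum_{\text{block}}s_x^2}\big)\le 1-c/N^{1/8}$ for each block by a second-moment/LCLT lower bound on $\sum_{\text{block}} s_x^2$ conditioned on the block's starting point being $O(N^{1/4})$, uniformly, and multiplies.

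The main obstacle I anticipate is precisely extracting the $N^{-1/8}$ factor with a uniform (in $s$) constant while simultaneously keeping the Gaussian tail: the two estimates want to be done on different scales, so I expect to need to condition on the walk's behavior on an initial segment (to get down to $s=O(N^{1/4})$ with the Gaussian cost) and then run the block argument on the remaining segment to harvest the $N^{-1/8}$, being careful that the block argument's lower bound on $\sum s_x^2$ holds uniformly over all starting heights $O(N^{1/4})$ — which is true because the dominant contribution to a block's $\sum s_x^2$ comes from the walk's own fluctuation within the block, of size $\sqrt{n/N^{1/8}}$, independent of where the block starts. A cleaner alternative that avoids blocks: apply \eqref{2.1} with $n$ slightly larger, use \eqref{25-L} to replace $h_N(s_n)$-expectation by a Gaussian-kernel convolution, and observe that $\int e^{-\frac1{2N}\sum s_x^2}$ integrated against the LCLT already produces the $n^{-1/2}\sim N^{-1/4}$ normalization twice-reduced to $N^{-1/8}$ after optimizing the relation between $n$ and the potential strength; I would present whichever of these runs more transparently.
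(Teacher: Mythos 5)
There is a genuine gap, and it sits exactly where you flagged your own discomfort: the $N^{-1/8}$ prefactor. Your main line of attack bounds $h_N(s_n)\le\|h_N\|_\infty\le 1$ and then tries to extract both the Gaussian tail \emph{and} the factor $N^{-1/8}$ from $\mathbb E_s\big(e^{-\frac1{2N}\sum_{x=0}^n s_x^2}\big)$ with $n\sim\sqrt N$. This cannot work: at $s=0$ that expectation is bounded \emph{below} by a positive constant uniformly in $N$. Indeed, on the event that the walk stays in $[-N^{1/4},N^{1/4}]$ up to time $\sqrt N$ (an event of probability bounded away from $0$), the exponent is at most $\frac1{2N}\cdot\sqrt N\cdot\sqrt N=\frac12$, so $\mathbb E_0\big(e^{-\frac1{2N}\sum s_x^2}\big)\ge e^{-1/2}\,c>0$; equivalently, by the invariance principle it converges to $\mathbb E\big(e^{-\frac{\sigma^2}2\int_0^1B_t^2\,dt}\big)>0$. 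Your block decomposition confirms this rather than fixing it: $N^{1/8}$ blocks each contributing a factor $1-c/N^{1/8}$ multiply to $(1-c/N^{1/8})^{N^{1/8}}\to e^{-c}$, a constant, not $N^{-1/8}$. Once $h_N(s_n)$ has been replaced by $1$, the $N^{-1/8}$ is simply no longer in the problem.

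The correct source of the $N^{-1/8}$ — and this is what the paper does — is the factor $h_N(s_n)$ itself, kept inside the expectation and handled via its $\ell^2$-normalization. Splitting \eqref{2.1} by Cauchy--Schwarz gives $h_N(s)\le\la_N^{-n}\big[\mathbb E_s\big(e^{-\frac1N\sum s_x^2}\big)\big]^{1/2}\big[\mathbb E_s\big(h_N^2(s_n)\big)\big]^{1/2}$, and the local CLT bound \eqref{25-L} yields $\sup_u\pi\big(\sum_{k=1}^n\eta_k=u\big)\le K n^{-1/2}$, hence $\mathbb E_s\big(h_N^2(s_n)\big)\le Kn^{-1/2}\sum_{s'}h_N^2(s')=Kn^{-1/2}$; with $n=\sqrt N$ the square root is $O(N^{-1/8})$. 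The first Cauchy--Schwarz factor then only needs to supply the Gaussian tail $e^{-cs^2/\sqrt N}$, which your maximal-inequality/Chernoff argument (essentially equivalent to the paper's stopping-time martingale computation) does deliver. Your closing ``cleaner alternative'' gestures at the right mechanism (LCLT producing an $n^{-1/2}$ normalization against $h_N$), but it is not developed, and as written the proposal's primary argument fails at the decisive step.
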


\noindent {\bf Proof}. Below we will write $h(s)$ for the eigenfunction $h_N(s)$, and $\lambda$ for $\lambda_N$.

Because of the symmetry of $h$, it is enough to consider $s>0$.
From \eqref{2.1} we get
	\begin{equation}
	\label{2.1b}
        h( s)\le\frac 1{\la^n} \Big[ \mathbb E_{s}( e^{-\frac 2{2N}\sum_{x=0}^n s_x^2})\Big]^{1/2}
        \Big[\mathbb E_{s}( h^2(s_n))\Big]^{1/2}
	\end{equation}
To estimate $\mathbb E_{s}( h^2(s_n))$ we use \eqref{25-L}, 
	\begin{eqnarray}
\mathbb E_{s}( h^2(s_n))&=& \sum_{s_n} \pi(\sum_{k=0}^{n}\eta_k=s_n-s)h^2(s_n) 
\nn \\
&\le& \sum_{s_n} \bar p(\sum_{k=0}^{n}\eta_k=s_n-s)h^2(s_n) + 
\frac{c}{n^{3/2}} \sum_{s_n}  h^2(s_n)
\nn \\
 &\le& \left[\frac{1}{\sqrt{2\pi n \si^2}}+ \frac{c}{n^{3/2}}\right]\sum_{s_n}  h^2(s_n)
 \nn \\
 &\le& \frac K{\sqrt n}\sum_{s'} h^2(s')= \frac K{\sqrt n}
 \label{2.1c}
	\end{eqnarray}
where $K$ is a constant  independent of $N$.

Thus for $n=\sqrt N$ we get
\begin{equation}
	\label{2.1d}
        h( s)\le
        \frac 1{\la^{\sqrt N}} \frac {\sqrt K}{N^{1/8}}
      \Big[   \mathbb E_{s}\big( e^{-\frac 1{N}\sum_{x=0}^n s_x^2}\big)\Big]^{1/2}
	\end{equation}
For $\alpha\in (0,1)$ we consider
\begin{equation}
	\label{2.2}
	z=\inf\{x: s_x   \le s(1-\alpha) \} 
      \end{equation}
  and we split the expectation on the right hand side of \eqref{2.1d}
  \begin{equation}
    \label{eq:3}
    \begin{split}
      \mathbb E_{s}\left( e^{-\frac 1{N}\sum_{x=0}^n s_x^2}\right) \le
      \mathbb E_{s}\left( e^{-\frac 1{N}\sum_{x=0}^{z-1} s_x^2} 1_{[z\le n]}\right) +
      \mathbb E_{s}\left( e^{-\frac 1{N}\sum_{x=0}^{n} s_x^2} 1_{[z > n]}\right) \\
      \le  \mathbb E_{s}\left( e^{-\frac {s^2(1-\alpha)^2}{N}z} 1_{[z\le n]}\right) +
      e^{-\frac {s^2(1-\alpha)^2(n+1)}{N}}
    \end{split}
  \end{equation}
   Calling
$M_x:= s_x-s$, and  $\La(a)=\log \mathbb E(e^{a\eta})$ for $|a|\le a_0$, see \eqref{pi}, we get that
        $ e^{aM_x-x\La(a)}$ is a martingale, so that
        \begin{equation}\label{eq:expm}
       1 = \mathbb E_{s}( e^{aM_{z\wedge n}- z\wedge n \La(a)})
            \ge   \mathbb E_{s}( e^{aM_{z}- z \La(a)} 1_{[z\le n]})
        \end{equation}
        Also
      $M_{z} \le -\alpha s$ and thus, choosing $a<0$, we have $aM_{z} \ge -a\alpha s$,
        so that:
$$
 \mathbb E(e^{-z\La(a)}1_{[z\le n]}) \le e^{a\alpha s}.
  $$
Since
$\La(a)= \frac 12 \si^2 a^2+O(a^4)$ choosing $a=-\frac {\sqrt 2 (1-\alpha)s}{\si N^{1/2}}$
we get  
\begin{equation*}
\mathbb E_s(e^{- \frac {(1-\alpha)^2s^2 }{N} z} 1_{[z\le n]}) \le
  e^{-\frac{\sqrt 2\alpha(1-\alpha) s^2}{2\si N^{1/2}}}
\end{equation*}
Recalling \eqref{eq:3}, we have
\begin{equation*}
  \mathbb E_{s}\left( e^{-\frac 1{N}\sum_{x=0}^n s_x^2}\right) \le  e^{-\frac{\sqrt 2\alpha(1-\alpha) s^2}{2\si N^{1/2}}}
  +  e^{-\frac {s^2(1-\alpha)^2(n+1)}{N}}
\end{equation*}
For $n = \sqrt N$ we thus get for there is a constant $b$ so that
\begin{equation}
  \label{eq:6}
  \Big[   \mathbb E_{s}\big( e^{-\frac 1{N}\sum_{x=0}^n s_x^2}\big)\Big]^{1/2}\le  e^{-\frac{b s^2}{N^{1/2}}}
\end{equation}
From (iii) of Theorem \ref{th1} there is $B>0$ so that $\lambda^{\sqrt N} \ge B$,
thus from \eqref{2.1d} and \eqref{eq:6} we get \eqref{eq:17}.
\qed

\bigskip

\section{Convergence and identification of the limit}
		\label{sec3}
We start the section with  a preliminary lemma.
\begin{lemma} There is $b>0$ so that
  \begin{equation}
    \label{eq:2}
    \sum_{s, \bar s} \pi(s- \bar s) \left( h_N(s) - h_N(\bar s)\right) ^2 \le \frac{b}{N^{1/2}}.
  \end{equation}
\end{lemma}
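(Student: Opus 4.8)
The plan is to recognize the left-hand side as twice the spectral gap-type quantity $1-\la_N$ (up to the weight coming from the Gaussian factor in $T_N$) and then simply invoke the lower bound $\la_N \ge 1 - cN^{-1/2}$ from Theorem \ref{th1}(iii). Concretely, the first step is to expand the square:
\begin{equation*}
  \sum_{s,\bar s}\pi(s-\bar s)\big(h_N(s)-h_N(\bar s)\big)^2
  = \sum_{s,\bar s}\pi(s-\bar s)h_N(s)^2 + \sum_{s,\bar s}\pi(s-\bar s)h_N(\bar s)^2
    - 2\sum_{s,\bar s}\pi(s-\bar s)h_N(s)h_N(\bar s).
\end{equation*}
Since $\pi$ is a probability distribution on $\mathbb Z$, for every fixed $s$ one has $\sum_{\bar s}\pi(s-\bar s)=1$, and likewise summing in the other variable; together with the normalization $\sum_s h_N(s)^2=1$ from \eqref{1.3}, the first two terms each equal $1$. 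Hence the left-hand side equals $2 - 2\sum_{s,\bar s}\pi(s-\bar s)h_N(s)h_N(\bar s)$. All these rearrangements of double series are legitimate because $h_N\ge 0$ (Krein--Rutman, Theorem \ref{th1}) so all terms are nonnegative, and the sums are finite since $h_N\in\ell^2(\mathbb Z)$.

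The second step is to bound the remaining cross term from below. Because $T_N(s,\bar s)=e^{-(s^2+\bar s^2)/(2N)}\pi(s-\bar s)\le \pi(s-\bar s)$ and $h_N(s)h_N(\bar s)\ge 0$, we get termwise
\begin{equation*}
  \sum_{s,\bar s}\pi(s-\bar s)h_N(s)h_N(\bar s)\ \ge\ \sum_{s,\bar s}T_N(s,\bar s)h_N(s)h_N(\bar s)\ =\ \la_N,
\end{equation*}
the last equality being the eigenvalue relation \eqref{1.3} paired against $h_N$ (using $\sum_s h_N^2=1$). Combining with the previous display,
\begin{equation*}
  \sum_{s,\bar s}\pi(s-\bar s)\big(h_N(s)-h_N(\bar s)\big)^2 \le 2(1-\la_N),
\end{equation*}
and Theorem \ref{th1}(iii) gives $1-\la_N\le cN^{-1/2}$, so the claim holds with $b=2c$.

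There is no serious obstacle here; the argument is purely algebraic once Theorem \ref{th1}(iii) is in hand. The only points that deserve a word of care are the use of nonnegativity (of $\pi$ and of $h_N$) both to justify Fubini/rearrangement of the nonnegative double sums and to preserve the inequality when the Gaussian weight $e^{-(s^2+\bar s^2)/(2N)}\le 1$ is dropped in passing from $T_N$ to $\pi$. I would state these explicitly and otherwise present the two displays above as the whole proof.
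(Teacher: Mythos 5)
Your proof is correct and follows essentially the same route as the paper: expand the square, use $\sum_{\bar s}\pi(s-\bar s)=1$ together with $\sum_s h_N(s)^2=1$ to reduce the left-hand side to $2-2\sum_{s,\bar s}\pi(s-\bar s)h_N(s)h_N(\bar s)$, and then compare the cross term with $\la_N$ via Theorem \ref{th1}(iii). Your termwise inequality $T_N(s,\bar s)\le\pi(s-\bar s)$ is in fact a small streamlining: the paper instead writes the exact decomposition $\sum\pi h_Nh_N=\la_N+\sum\bigl(1-e^{-(s^2+\bar s^2)/2N}\bigr)\pi h_Nh_N$ and separately bounds the correction term by $O(1/N)$ using $1-e^{-x}\le x$ and the decay estimate of Theorem \ref{teo2}, a step your sign observation (the correction term is nonnegative and enters with a favorable sign) renders unnecessary.
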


\begin{proof} Using that $\sum_sh_n(s)^2=1$ we have
  \begin{equation}\label{eq:split}
    \begin{split}
      \sum_{s, \bar s} \pi(s- \bar s) \left( h_N(s) - h_N(\bar s)\right) ^2 =
      2   \sum_{s, \bar s} \pi(s- \bar s)h^2_N(s) -2 \sum_{s, \bar s} \pi(s- \bar s)h_N(s)h_N(\bar s)
      \\ 
      =2-2\la_N -2 \sum_{s, \bar s} (1-e^{-(s^2 + \bar s^2)/2N} )\pi(s- \bar s)h_N(s)h_N(\bar s)
      \end{split}
  \end{equation}
  By  (iii) of Theorem \ref{th1} $2(1-\la_N)\le \frac{2c}{\sqrt N} $. 
By using that $1-e^x<x$  and that $\sum_s s^2 h_N(s)\le c'$, by Theorem \ref{teo2} we have
     \begin{eqnarray*}
&& 2  \sum_{s, \bar s} (1-e^{-(s^2 + \bar s^2)/2N} )\pi(s- \bar s)h_N(s)h_N(\bar s)\le
\frac 1{2N}  \sum_{s, \bar s} (s^2 + \bar s^2) \pi(s- \bar s)[h^2_N(s)+h^2_N(\bar s)]\\&& \le \frac {\si^2}{2N} + \frac {c'}{2N}
  \end{eqnarray*}
 From this \eqref {eq:2} follows.
 
\end{proof}

Define for $r\in\mathbb R$
	\begin{eqnarray}
	\label{3.1}
 \tilde h_N^2(r)=N^{1/4} h_N^2([r N^{1/4}]),\qquad [] =  \text{ integer part}
		\end{eqnarray}

\begin{proposition}
\label{teo3}
The following holds.

\begin{enumerate}

\item The sequence of measures $ \tilde h_N^2(r)dr$ in  $\mathbb R$ is tight and any limit measure is absolutely continuous with respect to the Lebesgue measure.

\item The sequence of functions $\tilde h_N(r):=N^{1/8} h_N([r N^{1/4}])$ is sequentially compact in $L^2(\mathbb R)$.

\end{enumerate}
\end{proposition}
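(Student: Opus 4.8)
The plan is to establish both parts of Proposition \ref{teo3} from the Gaussian-type pointwise bound of Theorem \ref{teo2} together with the Dirichlet-form estimate of the preceding Lemma. The key observation is that Theorem \ref{teo2} already provides uniform control at the correct scale: writing $r = s/N^{1/4}$ we have $\tilde h_N^2(r) = N^{1/4} h_N^2([rN^{1/4}]) \le C^2 \exp\{-2c\, [rN^{1/4}]^2/N^{1/2}\} \le C' e^{-c' r^2}$ for all large $N$, uniformly in $r$ on compact sets (and in fact globally, up to adjusting constants near $r=0$). Thus the densities $\tilde h_N^2(r)$ are uniformly bounded by a fixed integrable Gaussian envelope.

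First I would prove tightness in part (1): since $\int \tilde h_N^2(r)\,dr = \sum_s h_N^2(s) = 1$ by the change of variables implicit in \eqref{3.1} (more precisely, this Riemann sum converges to $1$; one checks $\int \tilde h_N^2(r)\,dr \to 1$), the measures $\tilde h_N^2(r)\,dr$ are probability measures (asymptotically), and the uniform Gaussian bound $\tilde h_N^2(r) \le C' e^{-c' r^2}$ gives uniform integrability of the tails: $\int_{|r|>R} \tilde h_N^2(r)\,dr \le \int_{|r|>R} C' e^{-c'r^2}\,dr \to 0$ as $R\to\infty$, uniformly in $N$. This is exactly tightness of the sequence of measures. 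For the absolute continuity of any limit: by the uniform bound, any weak limit $\mu$ satisfies $\mu(A) = \lim \int_A \tilde h_N^2 \le C' |A|$ for every interval $A$ (using that $\tilde h_N^2(r)\,dr \le C' e^{-c'r^2}\,dr$ as measures, and the latter is absolutely continuous with bounded density), hence $\mu \ll \mathrm{Leb}$ with density bounded by $C'$.

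For part (2), the natural route is to combine the $L^2$ bound $\|\tilde h_N\|_{L^2}^2 \to 1$ (same change of variables) with an equicontinuity-type estimate coming from the Dirichlet form. The Lemma gives $\sum_{s,\bar s}\pi(s-\bar s)(h_N(s) - h_N(\bar s))^2 \le b N^{-1/2}$; since $\pi$ has bounded support effectively (exponential tails, finite variance) one extracts from this a discrete $H^1$-type bound: for a unit lattice step, $\sum_s (h_N(s+1) - h_N(s))^2 \lesssim N^{-1/2}$ (using aperiodicity/irreducibility of $\pi$ to control the single-step increment by the $\pi$-weighted increments via a telescoping/comparison argument, as $\pi$ is not supported on a single point). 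Rescaling, the function $\tilde h_N(r) = N^{1/8} h_N([rN^{1/4}])$ then has $\int (\tilde h_N'(r))^2\,dr \sim N^{1/4}\cdot N^{1/4} \cdot \sum_s(h_N(s+1)-h_N(s))^2 \lesssim N^{1/4}\cdot N^{1/4} \cdot N^{-1/2} = O(1)$ — so $\tilde h_N$ is bounded in $H^1(\mathbb R)$ (interpreting the interpolant), which by Rellich's theorem is compactly embedded in $L^2_{\mathrm{loc}}$; combined with the uniform Gaussian tail bound from part (1), the tails are uniformly small in $L^2$, upgrading $L^2_{\mathrm{loc}}$-compactness to compactness in $L^2(\mathbb R)$.

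I expect the main obstacle to be the bookkeeping in passing from the $\pi$-weighted Dirichlet form $\sum_{s,\bar s}\pi(s-\bar s)(h_N(s)-h_N(\bar s))^2$ to a genuine nearest-neighbour gradient bound $\sum_s (h_N(s+1)-h_N(s))^2$, and then correctly tracking the powers of $N$ under the $N^{1/4}$ rescaling so that the $H^1$ seminorm of $\tilde h_N$ comes out $O(1)$ rather than blowing up. The comparison step uses that $\pi$ is symmetric, aperiodic and has finite exponential moments, so the single-step increment is dominated (in the quadratic-form sense, by Cauchy–Schwarz along a shortest path of $\pi$-steps) by a constant times the $\pi$-Dirichlet form; one must make sure the constant is $N$-independent, which follows since $\pi$ is fixed. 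A secondary, purely technical point is justifying that the Riemann sums $\int \tilde h_N^2(r)\,dr$ and $\int (\tilde h_N'(r))^2 dr$ really converge to (resp. are comparable to) the corresponding lattice sums; this is routine given the uniform Gaussian control and the mesh size $N^{-1/4}\to 0$.
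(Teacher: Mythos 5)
Your proposal is correct and follows essentially the same route as the paper: part (1) is exactly the Gaussian envelope $\tilde h_N^2(r)\le Ce^{-cr^2}$ from Theorem \ref{teo2}, and part (2) rests on the same comparison of nearest-neighbour increments with the $\pi$-weighted Dirichlet form of the preceding Lemma, correctly rescaled to an $O(1)$ bound. The only cosmetic difference is that the paper turns this into the translation estimate $\int(\tilde h_N(r+\delta)-\tilde h_N(r))^2dr\le C\delta^2$ and invokes Kolmogorov--Riesz, whereas you pass through the linear interpolant and Rellich; these are interchangeable here, since the $H^1$ bound on the interpolant yields precisely that translation estimate.
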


\begin{proof}
As a straightforward consequence of Theorem \ref{teo2}, we have that
	\begin{equation}
	\label{4.4}
  \tilde h_N^2(r) \le C e^{- c\,\, r^2}
	\end{equation}
It follows that for any $\eps$
there is $k$ so that $\dis{\int_{|r|\le k} \tilde h_N^2(r)dr\ge 1-\eps}$,
which proves tightness of the sequence of probability measures
$ \tilde h_N^2(r) dr$ on $\mathbb R$. From \eqref{4.4} we also get that any limit measure
must be absolutely continuous.

To prove that the sequence $( \tilde h_N(r))_{N\ge 1}$ is sequentially compact in $L^2(\mathbb R)$ we prove below that there exists a constant $C$ such that for any $N$ and any $\delta>0$:
  \begin{equation}
    \label{eq:4}
    \int \left( \tilde h_N(r + \delta) - \tilde h_N(r) \right)^2 dr
    \le C\delta^2 
  \end{equation}

  Assume that $\pi(1) >0$, then
  \begin{equation*}
    \begin{split}
      \int \left( \tilde h_N(r + \delta) - \tilde h_N(r) \right)^2 dr =  \sum_s \left( h_N(s + [\delta N^{1/4}]) - h_N(s) \right)^2\\
      =  \sum_s \left( \sum_{i=1}^{[\delta N^{1/4}] } \left(h_N(s + i) - h_N(s + i -1)\right) \right)^2\\
      \le \frac{[\delta N^{1/4}]}{\pi(1)} \sum_s \sum_{i=1}^{[\delta N^{1/4}] } \pi(1) \left(h_N(s + i) - h_N(s + i -1) \right)^2\\
      \le \frac{[\delta N^{1/4}]^2}{\pi(1)} \sum_{s,\bar s}  \pi(s-\bar s) \left(h_N(s) - h_N(\bar s) \right)^2
      \le \frac{c\delta^2}{\pi(1)}
    \end{split}
  \end{equation*}
  The condition $\pi(1) >0$ can be relaxed easily by a slight modification of the above argument.

  From \eqref{4.4} and \eqref{eq:4},
  applying the Kolmogorov-Riesz  compactness  theorem (see e.g. \cite{olsen}),
 we get that $\tilde h_N$ is sequentially compact in $L^2(\mathbb R)$.      \end{proof}

We next identify the limit.
	\begin{proposition}
	\label{prop2}
Any limit point $u(r)$  of $\tilde h_N(r)$ in $L^2$ satisfies in weak form
	\begin{equation}
	\label{3.2}
	u(r)= \frac 1\la \mathbb E_r\Big(e^{-\frac 12\int_0^1 B^2_sds}u(B_1)\Big)
	\end{equation}
        where $B_s$ is a Brownian motion with variance $\si^2$ and with
        $B_0=r$ furthermore  $\dis{\la=\lim_{N\to\infty} \la_N^{\sqrt N}}$ which exists.

The unique solution of \eqref{3.2} (up to a multiplicative constant) is
$u(r)=\exp \{- r^2/2\sigma\}$ and $\la= e^{-\si/2}$.
\end{proposition}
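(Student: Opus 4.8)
The plan is to prove the two assertions separately: first, that every $L^2$-limit point $u$ of the sequence $\tilde h_N$ satisfies the integral equation \eqref{3.2}; second, that this equation has a unique solution up to scalar multiples, given by the Gaussian $u(r)=e^{-r^2/2\sigma}$ with associated eigenvalue $\la=e^{-\si/2}$.

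For the first part I would start from the iterated eigenvalue relation \eqref{2.1} with $n=[\sqrt N\,t]$ — or simply $n=\sqrt N$ — rewritten in the rescaled variable $r = s/N^{1/4}$. Multiplying \eqref{2.1} by $N^{1/8}$ turns $h_N(s)$ into $\tilde h_N(r)$ and $h_N(s_n)$ into $\tilde h_N(s_n/N^{1/4})$, so that
\begin{equation*}
  \la_N^{\sqrt N}\, \tilde h_N(r) = \mathbb E_{r N^{1/4}}\Big( e^{-\frac1{2N}\sum_{x=0}^{\sqrt N} s_x^2}\; \tilde h_N\big(s_{\sqrt N}/N^{1/4}\big)\Big).
\end{equation*}
The random walk $s_x$ started at $rN^{1/4}$, after diffusive rescaling $s_{[t\sqrt N]}/N^{1/4}$, converges by Donsker's theorem to a Brownian motion $B_t$ with variance $\si^2$ and $B_0=r$; correspondingly the Riemann sum $\frac1{N}\sum_{x=0}^{\sqrt N}s_x^2 = \frac1{\sqrt N}\sum_{x=0}^{\sqrt N}(s_x/N^{1/4})^2$ converges to $\int_0^1 B_t^2\,dt$. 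Along a subsequence on which $\tilde h_N\to u$ in $L^2$, and using the uniform Gaussian domination \eqref{4.4} (which gives both tightness of the integrand and uniform integrability, and lets us pass from $L^2$-convergence of $\tilde h_N$ to convergence of the expectations after testing against a bounded continuous function of $r$), one identifies the limit of the right-hand side as $\mathbb E_r\big(e^{-\frac12\int_0^1 B_t^2\,dt}\,u(B_1)\big)$. Since $\la_N<1$ and $\la_N^{\sqrt N}\ge B>0$ by Theorem \ref{th1}(iii), the sequence $\la_N^{\sqrt N}$ is bounded away from $0$ and $1$, so along a further subsequence $\la_N^{\sqrt N}\to\la\in(0,1]$; the equation \eqref{3.2} then holds in weak form, and a posteriori (once uniqueness below pins down $\la=e^{-\si/2}$) the whole sequence converges. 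The main obstacle here is the interchange of the $L^2$-limit in $u$ with the expectation over Brownian paths: $\tilde h_N$ converges only in $L^2$, not pointwise, so one must either test the whole identity against a smooth compactly supported function of $r$ and use that the map $r\mapsto \mathbb E_r(e^{-\frac12\int_0^1 B^2}\,\varphi(B_1))$ is smoothing, or invoke the Gaussian bound \eqref{4.4} to upgrade convergence; I would carry this out in the weak (distributional) formulation, which is exactly what the statement asks for.

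For the second part, I would recognize \eqref{3.2} as the eigenvalue equation for the Feynman–Kac semigroup $e^{-L}$, where $L=-\frac{\si^2}{2}\frac{d^2}{dr^2}+\frac12 r^2$ is (a multiple of) the quantum harmonic oscillator Hamiltonian. Its ground state is the Gaussian $u(r)=e^{-r^2/2\sigma}$: a direct computation gives $-\frac{\si^2}{2}u'' + \frac12 r^2 u = \frac{\si}{2}u$ (using $u'=-\frac{r}{\sigma}u$, $u''=(\frac{r^2}{\sigma^2}-\frac1\sigma)u$, so $-\frac{\si^2}{2}u''=(-\frac{r^2}{2}+\frac{\si}{2})u$), whence $e^{-L}u=e^{-\si/2}u$ and $\la=e^{-\si/2}$. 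Uniqueness up to a constant follows from the Perron–Frobenius/Krein–Rutman property of the Feynman–Kac semigroup: the kernel of $e^{-L}$ is strictly positive, the semigroup is irreducible and (by the quadratic confining potential) compact on $L^2(\mathbb R)$, so its top eigenvalue is simple with a one-dimensional eigenspace spanned by a strictly positive function; since $u=e^{-r^2/2\sigma}$ is a positive $L^2$ eigenfunction, it must be that ground state, and any other positive $L^2$ limit point of $\tilde h_N$ — which is nonnegative by construction and nonzero since $\|\tilde h_N\|_{L^2}\to 1$ — coincides with it up to normalization. This also forces the full sequence $\tilde h_N$ to converge and $\la_N^{\sqrt N}\to e^{-\si/2}$, completing the proof.
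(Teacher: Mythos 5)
Your proposal is correct, and the first half follows the paper's route almost exactly: iterate the eigenvalue relation $\sqrt N$ times, rescale, invoke Donsker for the random walk and the Riemann sum $\frac1{\sqrt N}\sum(s_x/N^{1/4})^2\to\int_0^1B_t^2\,dt$, and pass to the limit along an $L^2$-convergent subsequence in weak form. You correctly flag the one delicate point, the interchange of the $L^2$-limit with the path expectation; the paper resolves it (display \eqref{4.10}) by testing against $\varphi\in L^2$, applying Cauchy--Schwarz, and using that the random-walk kernel $\pi_{[\sqrt N]}$ is doubly stochastic so that $\sum_{s'}\pi_{[\sqrt N]}(s-s')=1$ collapses the error to $C\|\varphi\|_{L^2}\|\tilde h_N-u\|_{L^2}\to0$; your suggested fixes (testing against smooth $\varphi$, or the Gaussian bound \eqref{4.4}) are workable but this is the cleanest implementation. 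Where you genuinely diverge is the identification step. The paper verifies directly that $u(r)=e^{-r^2/2\sigma}$, $\la=e^{-\sigma/2}$ solve \eqref{3.2} by an explicit exponential-martingale computation: it writes down the Girsanov-type martingale $\exp\{-\frac12\int_0^t(\sigma B_s+r)^2ds-\int_0^t(\sigma B_s+r)dB_s\}$, evaluates the stochastic integral by It\^o's formula, and reads off the identity $e^{-r^2/2\sigma}=e^{\sigma/2}\,\mathbb E\big(e^{-\frac12\int_0^1(\sigma B_s+r)^2ds}e^{-(\sigma B_1+r)^2/2\sigma}\big)$. You instead recognize \eqref{3.2} as the ground-state equation for the Feynman--Kac semigroup of the harmonic oscillator $L=-\frac{\sigma^2}{2}\partial_r^2+\frac12 r^2$, check $Lu=\frac{\sigma}{2}u$ by direct differentiation, and get uniqueness from compactness and positivity-improvement of $e^{-L}$. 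Both are valid; your spectral route has the advantage of making the uniqueness claim (simplicity of the top eigenvalue, hence that the nonnegative limit point $u$ must be the ground state, hence convergence of the full sequence and of $\la_N^{\sqrt N}$) fully explicit, whereas the paper leaves that Perron--Frobenius step implicit and only exhibits the solution. The paper's martingale computation is more self-contained and avoids invoking the spectral theory of Schr\"odinger operators, at the cost of appearing less motivated.
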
	

\noindent{\bf{Proof.}} Given $r$ call  $r_N =[r N^{1/4}]$, iterating \eqref{1.3} $\sqrt N$ times (assuming that $\sqrt N$ is an integer) we get
	\begin{eqnarray}
	\label{3.4}
          \tilde h_N(r) = \frac 1 {\la_N^{\sqrt N}}\,\,
          \mathbb E^N_{r_N}\Big(\exp\big\{-\frac 1{2\sqrt N} \sum_{x=0}^{\sqrt N}
          \frac {s_x^2}{N^{1/2}}\big \}\,\, \tilde h_N(N^{-1/4} s_{\sqrt N})\Big)
	\end{eqnarray}
where $\mathbb E^N_{r_N}$ is the expectation w.r.t. the random walk which starts from $r_N$.
	\begin{equation}
	\label{3.3}
s_x=r_N+\sum_{k=1}^x \eta_x,\qquad x=1,..,\sqrt N
	\end{equation}
        By the invariance principle,
        \begin{equation}
        \frac{ s_{t\sqrt N} - r_N}{N^{1/4}} \ \longrightarrow \ \si B_t \qquad t\in[0,1]
        \label{4.5}
        \end{equation}
        in law, where $B_t$ is a standard Brownian motion which starts from $0$.

        Take a subsequence along which $\tilde h_N$ converges strongly in  $L^2(\mathbb R)$ and call  $u(r)$ the limit point. 
        Choosing a test function $\varphi\in L^2(\mathbb R)$, and denoting
        $\pi_n(s) = \pi\left(\sum_{k=1}^{n} \eta_k = s\right) $, we get along that sequence
        \begin{equation}
        \label{4.10}
        \begin{split}
N^{-1/4} \sum_{s'}\varphi(N^{-1/4} s') \mathbb E^N_{s'}\Big(\exp\big\{-\frac 1{2\sqrt N}
 \sum_{x=0}^{\sqrt N} \big(\frac {s_x}{N^{1/4}})^2\big \}\, \big
 |\tilde h_N(N^{-1/4} s_{\sqrt N}) -u(N^{-1/4} s_{\sqrt N}\big |\Big) \\
 \le N^{-1/4} \sum_{s'}\varphi(N^{-1/4} s') \mathbb E^N_{s'}
 \Big( \big|\tilde h_N(N^{-1/4} s_{\sqrt N}) -u(N^{-1/4} s_{\sqrt N}\big |\Big)\\
= N^{-1/4} \sum_{s,s'}\varphi(N^{-1/4} s') \pi_{[\sqrt N]}\left( s-s'\right)
 \Big( \big|\tilde h_N(N^{-1/4} s) -u(N^{-1/4} s) \big |\Big)
 \\
 \le  N^{-1/4} \sum_{s'} |\varphi(N^{-1/4} s')| 
 \left(\sum_s \pi_{[\sqrt N]}\left( s-s'\right) \big|\tilde h_N(N^{-1/4} s) -u(N^{-1/4} s)\big |^2\right)^{1/2}\\
 \le \left(N^{-1/4} \sum_{s'} |\varphi(N^{-1/4} s')|^2\right)^{1/2}
 \left(N^{-1/4} \sum_{s'} \sum_s \pi_{[\sqrt N]}\left( s-s'\right)
   \big|\tilde h_N(N^{-1/4} s) -u(N^{-1/4} s)\big |^2\right)^{1/2}\\
 = \left(N^{-1/4} \sum_{s'} |\varphi(N^{-1/4} s')|^2\right)^{1/2}
 \left(N^{-1/4}  \sum_s \big|\tilde h_N(N^{-1/4} s) -u(N^{-1/4} s)\big |^2\right)^{1/2}\\
 \le C \|\varphi\|_{L^2} \|\tilde h_N - u\|_{L^2} \ \mathop{\longrightarrow}_{N\to \infty} \ 0.
\end{split}
\end{equation}

Since the exponential on the right hand side of \eqref{3.4} is a bounded functional of the random walk, from \eqref{4.5} we get (along the chosen sequence),
\begin{eqnarray}
\nn
&&\hskip-2cm  \lim_{N\to \infty}\mathbb E^N_{r_N}\Big(\exp\big\{-\frac 1{2\sqrt N}
  \sum_{x=0}^{\sqrt N} \big(\frac {s_x}{N^{1/4}})^2\big \}\, u(N^{-1/4} s_{\sqrt N}) \Big)
	\\&&\nn\hskip.5cm
=\lim_{N\to \infty}\mathbb E^N_{0}\Big(\exp\big\{-\frac 1{2\sqrt N}
  \sum_{x=0}^{\sqrt N} \big(\frac {s_x+r_N}{N^{1/4}})^2\big \}\,  u(N^{-1/4} s_{\sqrt N}) \Big)
  \\&&\hskip.5cm \label{4.11}
  =\mathbb E_0 \Big(e^{-\frac {1}2 \int_0^1 (\si B_s + r)^2 ds}u(\sigma B_1+ r)\Big)
	\end{eqnarray}
        where $\mathbb E_0 $ is the expectation w.r.t.
        the law of a standard Brownian motion starting at 0
        and the limits are intended in the weak $L^2$ sense.

        Since $\tilde h_N$ is converging strongly in $L^2$ (along the subsequence we have chosen)
        and the expectation on the right hand side of \eqref{3.4} has a finite limit,
        we get that the limit of $\la_N^{\sqrt N}$ must exists.

\medskip

Observe that for a standard Brownian motion $\{B_s\}_{s\in[0,1]}$ we have that
	\begin{eqnarray*}
\exp\big\{ -\frac{1}2 \int_0^t (\sigma B_s + r)^2 ds - \int_0^t (\sigma B_s+ r) dB_s\big\},\qquad \text{ is a martingale.}
	\end{eqnarray*}
Furthermore by Ito's formula
	\begin{equation*}
- \sigma\int_0^1 (\sigma B_s + r) dB_s= -\frac {1}2 (\si B_1+r)^2 + \frac {r^2}2 + \frac{\sigma^2}2
	\end{equation*}
        Thus
        \begin{equation*}
          \begin{split}
            1 = \mathbb E\left( \exp\big\{ -\frac{1}2 \int_0^1 (\sigma B_s + r)^2 ds
              - \int_0^1 (\sigma B_s+ r) dB_s\big\}\right)\\
            = \mathbb E\left(\exp\big\{ -\frac{1}2 \int_0^t (\sigma B_s + r)^2 ds
              -\frac {1}{2\sigma} (\si B_1+r)^2
              + \frac {r^2}{2\sigma} + \frac{\si}2\big\}\right)
        \end{split}
      \end{equation*}
      that implies
	\begin{equation*}
e^{-\frac{r^2}{2\sigma}}
=e^{\si/2} \mathbb E\Big(e^{-\int_0^1 (\si B_s + r)^2 ds}e^{-\frac 1{2\si} (\si B_1 +r)^2}\Big)\end{equation*}
Comparing with \eqref{3.2} we identify $u(r)$ and $\la$.\qed

\medskip

We thus have the following corollary of Proposition \ref{teo3} and Proposition \ref{prop2}.
\begin{corollary}
\label{coro}
The sequence of measures $ \tilde h_N^2(r)dr$ in  $\mathbb R$ converges weakly to the gaussian measure $g^2(r)dr$ where
$g(r)= (\pi\si)^{-1/4} e^{-r^2/2\si}$.

Moreover for any $\psi, \varphi\in C_b(\mathbb R)$ and  any $t\in[0,1]$
	\begin{eqnarray}
\nn &&\hskip-1cm
\lim_{N\to\infty} \frac 1 {\la_N^{\sqrt N}}\,\,\frac 1{N^{1/4}}\sum_{s} \tilde h_N(N^{-1/4}s)\psi(N^{-1/4}s)
\\&&\nn\hskip1cm\mathbb E^N_{s}\Big(\exp\big\{-\frac 1{2\sqrt N} \sum_{x=0}^{[t\sqrt N]} \frac {s_x^2}{N^{1/2}}\big \}\,\, \tilde h_N(N^{-1/4} s_{[t\sqrt N]})\varphi(N^{-1/4} s_{[t\sqrt N]})\Big)
          \\&&= e^{\si/2}\int\psi(r)g(r) \mathbb E_r \Big(e^{-\frac {1}2 \int_0^t (\si B_s)^2 ds}\varphi(\sigma B_t)g(\sigma B_t)\Big)dr
          \label{4.12}
	\end{eqnarray}
where $\mathbb E_r $ is the expectation w.r.t. the law of the Brownian motion starting at $r$.

\end{corollary}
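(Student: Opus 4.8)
The plan is to derive both claims from Propositions \ref{teo3} and \ref{prop2}. For the convergence of measures, I would first upgrade the sequential $L^2$-compactness of $(\tilde h_N)$ to convergence of the whole sequence: by Proposition \ref{teo3}(2) every subsequence has a further subsequence converging strongly in $L^2(\mathbb R)$; by Proposition \ref{prop2} its limit is a nonnegative multiple of $e^{-r^2/2\si}$; and since $\int\tilde h_N^2\,dr=\sum_s h_N^2(s)=1$ for all $N$ by \eqref{3.1} and \eqref{1.3}, that limit is forced to be $g(r)=(\pi\si)^{-1/4}e^{-r^2/2\si}$, the unique $L^2$-normalized nonnegative such function. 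Hence $\tilde h_N\to g$ strongly in $L^2(\mathbb R)$, and then $\|\tilde h_N^2-g^2\|_{L^1}\le\|\tilde h_N-g\|_{L^2}\|\tilde h_N+g\|_{L^2}\le 2\|\tilde h_N-g\|_{L^2}\to 0$, so $\tilde h_N^2\,dr\to g^2\,dr$ in total variation, a fortiori weakly.

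For \eqref{4.12}, write the left-hand side as $\la_N^{-\sqrt N}\,N^{-1/4}\sum_{s'}\tilde h_N(N^{-1/4}s')\psi(N^{-1/4}s')\,A_N(s')$ with $A_N(s')$ the inner expectation, and let $A_N^g(s')$ be the same expectation with the inner $\tilde h_N$ replaced by $g$. The strategy is to (a) replace $A_N$ by $A_N^g$ and (b) replace the outer $\tilde h_N$ by $g$, both at vanishing cost, and then (c) take the limit of the resulting sum via the invariance principle. Step (a) is the key point and is handled exactly as in \eqref{4.10}: since $\exp\{\cdots\}\le 1$, $|A_N(s')-A_N^g(s')|\le\|\varphi\|_\infty\,\mathbb E^N_{s'}\big(|\tilde h_N-g|(N^{-1/4}s_{[t\sqrt N]})\big)$; expanding with the $[t\sqrt N]$-step kernel $\pi_{[t\sqrt N]}$ and applying Cauchy--Schwarz first in the endpoint variable (using $\sum_s\pi_{[t\sqrt N]}(s-s')=1$) and then in $s'$ gives
\[
  N^{-1/4}\sum_{s'}\big|\tilde h_N\psi\big|(N^{-1/4}s')\,\big|A_N-A_N^g\big|(s')\ \le\ \|\varphi\|_\infty\,\|\psi\|_\infty\,\|\tilde h_N-g\|_{L^2}\ \longrightarrow\ 0 .
\]

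For step (b), $|A_N^g(s')|\le\|\varphi\|_\infty\|g\|_\infty$, so the error is at most $\|\psi\|_\infty\|\varphi\|_\infty\|g\|_\infty\,\|\tilde h_N-g\|_{L^1}$, and $\|\tilde h_N-g\|_{L^1}\to 0$ because $\tilde h_N\to g$ in $L^2$ while \eqref{4.4} provides a uniform Gaussian tail (so $L^2$ convergence on compacts plus a negligible tail gives $L^1$ convergence). There remains $\la_N^{-\sqrt N}\,N^{-1/4}\sum_{s'}(g\psi)(N^{-1/4}s')\,A_N^g(s')$; viewing it as $\int\tilde\phi_N(r)G_N(r)\,dr$ with $\tilde\phi_N(r)=(g\psi)(N^{-1/4}\lfloor rN^{1/4}\rfloor)$ and $G_N(r)=A_N^g(\lfloor rN^{1/4}\rfloor)$, one has $\tilde\phi_N\to g\psi$ pointwise with a fixed integrable dominating function, $|G_N|\le\|\varphi\|_\infty\|g\|_\infty$, and $G_N(r)\to\mathbb E_r\big(e^{-\frac12\int_0^t(\si B_s)^2\,ds}\varphi(\si B_t)g(\si B_t)\big)$ for each $r$ by the invariance principle \eqref{4.5} — the latter being the convergence of the expectation of a bounded continuous functional of the path $(N^{-1/4}s_{[u\sqrt N]})_{u\in[0,t]}$ started near $rN^{1/4}$ to that functional of $(r+\si B_u)_{u\in[0,t]}$. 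Dominated convergence together with $\la_N^{\sqrt N}\to e^{-\si/2}$ (Proposition \ref{prop2}) then yields the right-hand side of \eqref{4.12}. The only genuinely delicate point throughout is that $\tilde h_N$ converges to $g$ only in $L^2$ and not pointwise, so it cannot be inserted directly inside the random-walk expectation; the local central limit theorem \eqref{25-L} used in step (a) is exactly what absorbs this, and everything else is routine once Propositions \ref{teo3} and \ref{prop2} are in hand.
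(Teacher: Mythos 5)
Your proof is correct and follows essentially the same route as the paper: the first claim is the same subsequence-plus-normalization argument, and your steps (a)--(c) are exactly the ``adaptation of \eqref{4.10} and \eqref{4.11}'' that the paper's two-line proof invokes, just written out in full. The only (harmless) deviation is that you obtain weak convergence of $\tilde h_N^2\,dr$ directly from $L^1$ convergence via Cauchy--Schwarz rather than citing the tightness statement in Proposition \ref{teo3}(1).
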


\begin{proof} From Proposition \ref{prop2} we have that any subsequence of $ \tilde h_N(r)$ converges in $L^2(\mathbb R)$ to $c e^{-r^2/2\si}$ but since $\| \tilde h^2_N\|_{L^2}=1$ we get that $c$ must be equal to $ (\pi\si)^{-1/4} $. This together with (1) of  Proposition \ref{teo3} concludes the proof.

The proof of \eqref{4.12} is an adaptation  of \eqref{4.10} and \eqref{4.11}.\end{proof}

\section{ Proof of Theorem \ref{main}}
\label{sec:4}

Recall that   $\mathcal P_N$  and $\mathcal E_N$ denote respectively  the law and the expectation  in $C([0,1])$ of the process $ \tilde S_N(t) =N^{-1/4} s_{[tN^{1/2}]} $  induced by the law of the Markov chain with transition probabilities given in \eqref{2.4a} and initial distribution the invariant measure $\tilde h^2_N(r)dr$. 

\begin{proposition}
  \label{inv-princ}
  The finite dimensional distributions of $\tilde S_N(t)$, $t\in [0,1]$, 
  converge in law to those of the stationary Ornstein-Uhlenbeck.
\end{proposition}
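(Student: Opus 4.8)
The statement is only about finite-dimensional distributions (tightness in $C([0,1])$ is treated separately), so it suffices to prove that for every $0\le t_1<\dots<t_m\le 1$ and $F_1,\dots,F_m\in C_b(\mathbb R)$ the expectation $\mathcal E_N\big[\prod_{j=1}^m F_j(\tilde S_N(t_j))\big]$ converges to the corresponding expectation under the stationary Ornstein--Uhlenbeck law. Write $y_j:=[t_j\sqrt N]$. By \eqref{2.5a}, $\mathcal P_N$ is the law of the stationary Markov chain with kernel $p_N$ of \eqref{2.4a} started from its invariant measure $h_N^2$, so the joint law of $(s_{y_1},\dots,s_{y_m})$ is $h_N^2(\sigma_1)\prod_{j=2}^m p_N^{(y_j-y_{j-1})}(\sigma_{j-1},\sigma_j)$; moreover $\tilde S_N(t_j)$ differs from $N^{-1/4}s_{y_j}$ by at most $N^{-1/4}|\eta_{y_j+1}|$, which tends to $0$ in $L^2(\pi)$, so one may work with $(N^{-1/4}s_{y_j})_j$. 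Inserting $p_N^{(n)}(s,s')=h_N(s')/(\la_N^{\,n}h_N(s))\,T_N^{(n)}(s,s')$, the ratios of eigenfunctions telescope and one gets
\begin{equation}
\label{eq:fddtransfer}
\mathcal E_N\Big[\prod_{j=1}^m F_j(\tilde S_N(t_j))\Big]
=\frac{1}{\la_N^{\,y_m-y_1}}\sum_{\sigma_1,\dots,\sigma_m} h_N(\sigma_1)\Big[\prod_{j=2}^m T_N^{(y_j-y_{j-1})}(\sigma_{j-1},\sigma_j)\Big] h_N(\sigma_m)\prod_{j=1}^m F_j(N^{-1/4}\sigma_j),
\end{equation}
$T_N^{(n)}$ being the $n$-th iterate of $T_N$.

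Next I would pass to the limit in \eqref{eq:fddtransfer}, which is the multi-time version of the computations behind \eqref{3.4} and Corollary \ref{coro}. Expanding each $T_N^{(n)}$ as a Feynman--Kac expectation over the random walk as in \eqref{2.1} and gluing the $m-1$ walks into a single walk of length $y_m-y_1$ by the Markov property, the over-counted junction weights and the boundary factors $e^{\pm(s^2+s'^2)/2N}$ together contribute $e^{O(mN^{-1/2})}=1+o(1)$, uniformly on the range of the $\sigma_j$ where $h_N$ and the Feynman--Kac weight are not negligible (that range is $|\sigma_j|\lesssim N^{1/4}$, by the a priori bound of Theorem \ref{teo2}). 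Changing variables $\sigma_1=[rN^{1/4}]$ and writing $h_N=N^{-1/8}\tilde h_N(N^{-1/4}\cdot)$, the sum in \eqref{eq:fddtransfer} becomes an $L^2(\mathbb R)$--Riemann sum of exactly the type appearing in \eqref{4.12}. The limit is obtained as in the proof of Corollary \ref{coro}: one replaces $\tilde h_N$ by its $L^2$--limit $g$ at the two endpoints --- the error being controlled, verbatim as in \eqref{4.10}, by Cauchy--Schwarz against the $[\sqrt N]$--step heat kernel, the local limit theorem \eqref{25-L} and Proposition \ref{teo3}, the bounded factors $F_2,\dots,F_m$ and the Feynman--Kac weight entering only through their sup-norm --- and then uses the invariance principle \eqref{4.5} and the boundedness and continuity of the functional $\omega\mapsto \exp\{-\tfrac12\int_0^{t_m-t_1}\omega(u)^2\,du\}\,g(\omega(t_m-t_1))\prod_{j=2}^m F_j(\omega(t_j-t_1))$ on $C([0,t_m-t_1])$ to pass to the limit in the walk expectation; the $dr$--integration converges by dominated convergence using the Gaussian bound \eqref{4.4} together with the Feynman--Kac factor (which forces an $e^{-cr^2}$ decay in $r$, exactly as in the proof of Theorem \ref{teo2}). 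Since $\la_N^{\,y_m-y_1}=(\la_N^{\sqrt N})^{(y_m-y_1)/\sqrt N}\to e^{-\sigma(t_m-t_1)/2}$ by Theorem \ref{th1}(iii) and Proposition \ref{prop2}, this gives
\begin{equation}
\label{eq:fddlimit}
\mathcal E_N\Big[\prod_{j=1}^m F_j(\tilde S_N(t_j))\Big]\ \longrightarrow\ e^{\sigma(t_m-t_1)/2}\int_{\mathbb R} g(r)\,F_1(r)\,\mathbb E_r\!\Big(e^{-\frac12\int_0^{t_m-t_1} B_u^2\,du}\,g(B_{t_m-t_1})\prod_{j=2}^m F_j(B_{t_j-t_1})\Big)\,dr,
\end{equation}
with $B$ a Brownian motion of variance $\sigma^2$ started at $r$.

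Finally I would identify the right-hand side of \eqref{eq:fddlimit} with the finite-dimensional laws of the stationary Ornstein--Uhlenbeck process, using the Cameron--Martin computation already carried out in the proof of Proposition \ref{prop2}: it shows precisely that $\frac{g(B_u)}{g(r)}\,e^{\sigma u/2}\,e^{-\frac12\int_0^u B_s^2\,ds}$ is, on $\mathcal{F}_u$, the Radon--Nikodym density of the law of the Ornstein--Uhlenbeck process with generator $\tfrac{\sigma^2}{2}\partial_r^2-\sigma r\,\partial_r$ started at $r$ with respect to the law of $B$ started at $r$. Hence the inner expectation in \eqref{eq:fddlimit}, multiplied by $e^{\sigma(t_m-t_1)/2}$, equals $g(r)\,\mathbb E^{\mathrm{OU}}_r\big(\prod_{j=2}^m F_j(X_{t_j-t_1})\big)$, so the limit in \eqref{eq:fddlimit} is $\int_{\mathbb R} g^2(r)\,F_1(r)\,\mathbb E^{\mathrm{OU}}_r\big(\prod_{j=2}^m F_j(X_{t_j-t_1})\big)\,dr$. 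Since $g^2(r)\,dr$ is the invariant law of this Ornstein--Uhlenbeck process (Corollary \ref{coro}), this equals $\mathbb E\big(F_1(X_0)\prod_{j=2}^m F_j(X_{t_j-t_1})\big)$ for the stationary process, and shifting time by $t_1$ it equals $\mathbb E\big(\prod_{j=1}^m F_j(X_{t_j})\big)$, the required finite-dimensional expectation. As products of $C_b$ functions determine weak convergence on $\mathbb R^m$ (e.g. take $F_j=e^{i\xi_j\cdot}$ and apply Lévy's continuity theorem), the finite-dimensional distributions of $\tilde S_N$ converge to those of the stationary Ornstein--Uhlenbeck process.

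I expect the main obstacle to be the passage to the limit in \eqref{eq:fddtransfer}: one has to carry the intermediate, non-endpoint test functions $F_2,\dots,F_m$ through both the $L^2$-replacement $\tilde h_N\rightsquigarrow g$ (the step that requires the local limit theorem) and through Donsker's theorem, and one has to check that the junction over-counting in the concatenation of the random walks is uniformly negligible --- which is where the Gaussian a priori bound of Theorem \ref{teo2} is essential. Everything beyond this is bookkeeping layered on the single-step statements of Corollary \ref{coro} and Proposition \ref{prop2}.
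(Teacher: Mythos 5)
Your proposal is correct and follows essentially the same route as the paper: write the finite-dimensional expectation via the transfer operator $p_N$, telescope the eigenfunction ratios into iterates of $T_N$, pass to the limit by repeated application of the Feynman--Kac convergence of Corollary \ref{coro} (equation \eqref{4.12}), and identify the resulting Brownian Feynman--Kac expression with the stationary Ornstein--Uhlenbeck finite-dimensional laws through the Cameron--Martin computation of Proposition \ref{prop2}. You are somewhat more explicit than the paper on the final ground-state-transform identification and on the interpolation error $\tilde S_N(t_j)-N^{-1/4}s_{[t_j\sqrt N]}$, but the substance is identical.
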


\begin{proof}
 For any $k$, any $0\le \tau_1<..<\tau_k\le 1$ and any collection of continuous bounded functions with compact support $\varphi_0,\varphi_1$, ..$\varphi_k$, setting $t_i=\tau_i-\tau_{i-1}$, $i=1,..,k$, $\tau_0=0$ we have
	\begin{eqnarray*}
&&\hskip-1cm
\mathcal E_N\Big(\varphi_0(\tilde S_N(0))\varphi_1(\tilde S_N(t_1))...\varphi_k(\tilde S_N(t_k))\Big)=N^{-1/4}\sum_{r_0\in N^{-{1/4}}\mathbb Z} \tilde h_N(r_0)\varphi(r_0)\la_N^{-k\sqrt N}
\\&&\hskip.6cm
\mathbb E_{r_0}^N\Bigg(e^{-\frac 1{2\sqrt N} \sum_{x=0}^{[t_1\sqrt N]} \frac {s_x^2}{N^{1/2}}}\,\,\varphi_1(r_1)\mathbb E_{r_1}^N\Big(e^{-\frac 1{2\sqrt N} \sum_{x=0}^{[t_2\sqrt N]} \frac {s_x^2}{N^{1/2}}}\,\,\varphi_2(r_2)
\\&&\hskip2.6cm
....\mathbb E_{r_{k-1}}^N\Big(e^{-\frac 1{2\sqrt N} \sum_{x=0}^{[t_k\sqrt N]} \frac {s_x^2}{N^{1/2}}}\,\,\tilde h_N(r_k)\varphi_k(r_{k})\Big)..\Big)\Bigg)
	\end{eqnarray*}
where $r_i=N^{-1/4}\big[r_{i-1}+\sum_{x=1}^{[t_i\sqrt N]} \eta_x]$. Then from a ripetute use of \eqref{4.12} we get
\begin{eqnarray}
	 \nn
&&\hskip-.8cm \lim_{N\to\infty} \mathcal E_N\Big(\varphi_0(\tilde S_N(0))\varphi_1(\tilde S_N(t_1))\varphi_2(\tilde S_N(t_2))...\varphi_k(\tilde S_N(t_k))\Big)
\\&&=e^{k\si/2}\int g(r_0)\varphi(r_0)\mathbb E_{r_0}\Big(e^{-\int_0^{t_1} \si B_s}\varphi_1(\si B_{t_1})
...e^{-\int_0^{t_k} \si B_s}\varphi_k(B_{t_k})g(B_{t_k})\Big)dr_0
\nn \end{eqnarray}
 \end{proof}

To conclude the proof of Theorem \ref{main} we need to show tightness of $\mathcal P_N$ in $C([0,1])$; this is a consequence of Proposition \ref{tig} below, 
see Theorem 12.3, eq. (12.51) of \cite{billingsley}.
\begin{proposition} 
\label{tig} There is $C$ so that for all $N$,
   \begin{equation}
    \label{eq:5}
 \mathcal E_N\left( \left(\tilde S_N(t) - \tilde S_N(0)\right)^4\right) \le Ct^{3/2}.
  \end{equation}
\end{proposition}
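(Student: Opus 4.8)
The plan is to estimate the fourth moment $\mathcal E_N\big((\tilde S_N(t)-\tilde S_N(0))^4\big)$ directly in terms of the underlying Markov chain $P_N$, reducing to a random walk computation plus a bound coming from the exponential weight. Recall that, writing $m=[t\sqrt N]$ and $r_0 = N^{-1/4}s_0$, the quantity $\tilde S_N(t)-\tilde S_N(0)$ equals $N^{-1/4}(s_m - s_0)=N^{-1/4}\sum_{x=1}^{m}\eta_x$ under $P_N$, but with the $P_N$-law rather than the free random walk law. The first step is therefore to express $\mathcal E_N\big((\tilde S_N(t)-\tilde S_N(0))^4\big)$ using \eqref{2.5a}: it equals
\begin{equation*}
  \la_N^{-m}\sum_{s_0}h_N^2(s_0)\; \mathbb E^N_{s_0}\Big(e^{-\frac1{2\sqrt N}\sum_{x=0}^{m}\frac{s_x^2}{N^{1/2}}}\, \big(N^{-1/4}(s_m-s_0)\big)^4\, h_N(s_m)/h_N(s_0)\cdot(\text{boundary factor})\Big),
\end{equation*}
or more cleanly, since $\nu_N=P_N$, one can write it as a ratio of two $T_N$-weighted sums over trajectories, the numerator carrying the extra factor $(N^{-1/4}(s_m-s_0))^4$.

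Next I would bound the exponential weight and the eigenfunction ratio crudely by exploiting $\la_N^{\sqrt N}\ge B>0$ (Theorem \ref{th1}(iii)), $\|h_N\|_\infty\le 1$ (Theorem \ref{th1}(ii)), and the Gaussian upper bound $h_N(s)\le C N^{-1/8}e^{-cs^2/\sqrt N}$ (Theorem \ref{teo2}). Dropping the exponential weight $e^{-\cdots}\le 1$ and using these pointwise bounds reduces the problem to controlling, for the free random walk,
\begin{equation*}
  \frac{1}{N^{1/4}}\sum_{s_0}h_N(s_0)\,\mathbb E_{s_0}\Big(\big(N^{-1/4}(s_m-s_0)\big)^4\, h_N(s_m)\Big)
  = \frac{1}{N^{1/4}}\sum_{s_0,s_m}h_N(s_0)h_N(s_m)\,\pi_m(s_m-s_0)\,\big(N^{-1/4}(s_m-s_0)\big)^4 ,
\end{equation*}
where $\pi_m$ is the $m$-step transition kernel. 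Now I would split $\pi_m$ using the local CLT estimate \eqref{25-L}: $\pi_m(s-s')=\bar p_m(s-s')+O(m^{-3/2})$. For the Gaussian part $\bar p_m$, the factor $(N^{-1/4}(s_m-s_0))^4$ integrated against $\bar p_m(s_m-s_0)$ is, by the standard fourth-moment identity for a centered Gaussian of variance $\sigma^2 m$, of order $N^{-1}(\sigma^2 m)^2 \sim \sigma^4 t^2$ (uniformly in the starting point), and the remaining sum $\sum_{s_0,s_m}h_N(s_0)h_N(s_m)\bar p_m(\cdot)$ is $O(N^{1/4})$ by the same heat-kernel bookkeeping already used in \eqref{2.1c} — giving $O(t^2)$, which is $\le Ct^{3/2}$ since $t\le 1$. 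For the error term $O(m^{-3/2})$ I would use the sub-Gaussian tail \eqref{pi} (via the exponential moment $\Lambda(a)$) to control $\sum_{s_m}(s_m-s_0)^4 h_N(s_m)$ uniformly, and then $\sum_{s_0}h_N(s_0)=O(N^{1/8})\cdot O(N^{1/8})$ times $m^{-3/2}$ against the $N^{-1/4-1}$ normalization; one checks this contributes a lower order term. The upshot is $\mathcal E_N\big((\tilde S_N(t)-\tilde S_N(0))^4\big)\le C t^2 \le C t^{3/2}$.

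A subtlety to address: the bound $\le Ct^{3/2}$ is what is needed for the Billingsley tightness criterion (Theorem 12.3, eq. (12.51) of \cite{billingsley}) with exponent pair giving continuous paths, and although the natural estimate produces $t^2$, this is of course stronger on $[0,1]$, so I would simply record $t^2\le t^{3/2}$. The one place requiring care is that the fourth moment must be controlled \emph{uniformly over the starting distribution $h_N^2$ and over $t$}, i.e. the Gaussian fourth-moment estimate for $\bar p_m$ must not depend on $s_0$; this holds because $\sum_{s}(s)^4 \bar p_m(s)$ with $\bar p_m$ the centered Gaussian is exactly $3(\sigma^2 m)^2$ independent of the shift. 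The main obstacle, then, is not any single hard inequality but the careful bookkeeping of the several $N$-powers ($N^{-1/4}$ from the Riemann-sum normalization, $N^{-1}$ from the fourth power, $N^{1/8}$ from each $h_N$ factor via Theorem \ref{teo2}, $N^{-1/4}$ from $m^{1/2}\sim N^{1/4}$ scalings, and $B^{-1}$ from $\la_N^{-m}$) to confirm they combine to $O(t^2)$ with a constant independent of $N$; in particular one must verify that replacing $\la_N^{-m}$ by its bound and dropping the exponential weight does not destroy a cancellation — it does not, because the same replacements applied to the probability-normalization trajectory sum give a quantity bounded below by a positive constant, so the ratio stays $O(1)\cdot O(t^2)$. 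I would carry out this accounting explicitly, mirroring the estimate \eqref{2.1c} in the proof of Theorem \ref{teo2}, which already contains all the needed heat-kernel inputs.
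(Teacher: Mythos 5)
Your overall reduction coincides with the paper's: drop the exponential weight, bound $\la_N^{-[t\sqrt N]}$ by a constant via Theorem \ref{th1}(iii), and control the free random walk's fourth moment weighted by $h_N(s_0)h_N(s_{m})$ ($m=[t\sqrt N]$) using Theorem \ref{teo2}, which gives $\|h_N\|_\infty\le CN^{-1/8}$ and $\sum_s h_N(s)\le CN^{1/8}$. Where you diverge is the final random-walk estimate: the paper uses the pointwise kernel bound $\pi_n(s)\le Cn^{-1/2}(\sqrt n/|s|)^4$ (Lawler, Prop.~2.4.6, eq.~\eqref{eq:12}), which cancels the factor $|s_m-s_0|^4$ term by term and produces $t^{3/2}$ directly; you instead split $\pi_m=\bar p_m+(\pi_m-\bar p_m)$ via the local CLT \eqref{25-L}. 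Your Gaussian main term does give $O(t^2)\le O(t^{3/2})$, modulo two bookkeeping slips: the reduced expression should not carry the prefactor $N^{-1/4}$ (the stationary law of $s_0$ is $h_N^2(s_0)$, already normalized, and the Doob transform leaves exactly $h_N(s_0)h_N(s_m)$), and the sum cannot literally be factored into (fourth moment of $\bar p_m$) times $\sum h_Nh_N\bar p_m$ --- one must first bound $h_N(s_m)\le CN^{-1/8}$, perform the $s_m$-sum, and only then use $\sum_{s_0}h_N(s_0)\le CN^{1/8}$.

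The genuine gap is the error term. The estimate \eqref{25-L} is uniform in $s$, so its contribution is
\begin{equation*}
  \la_N^{-m}\,\frac{c}{m^{3/2}N}\sum_{s_0,s_m}h_N(s_0)\,h_N(s_m)\,(s_m-s_0)^4 ,
\end{equation*}
and even with the Gaussian decay \eqref{eq:17} one only gets $\sum_s h_N(s)s^4\le CN^{9/8}$, hence (using $(s_m-s_0)^4\le 8(s_m^4+s_0^4)$) a contribution of order $m^{-3/2}N^{-1}\cdot N^{1/8}\cdot N^{9/8}=c\,t^{-3/2}N^{-1/2}$. This exceeds $Ct^{3/2}$ as soon as $t\le cN^{-1/6}$, so the bound does not close for short increments --- exactly the regime the Billingsley criterion must cover (down to $t\sim N^{-1/2}$). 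Invoking the exponential moments of $\pi$ does not rescue this step, because the deficiency lies in the uniform error bound on $\pi_m-\bar p_m$, not in the tails of the walk. Two clean repairs: (i) use a local CLT with polynomially decaying error in $|s|/\sqrt m$, which is precisely what the paper's \eqref{eq:12} encodes; or (ii) dispense with the LCLT entirely and use the elementary identity $\mathbb E\big[(\sum_{k=1}^m\eta_k)^4\big]=m\,\mathbb E[\eta^4]+3m(m-1)\sigma^4\le Cm^2$, which combined with $\|h_N\|_\infty\le CN^{-1/8}$ and $\sum_s h_N(s)\le CN^{1/8}$ yields the (even stronger) bound $Cm^2/N\le Ct^2$ in two lines.
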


\begin{proof} 
  \begin{eqnarray}\label{eq:4mom}
&&
\mathcal E_N\left( \left(\tilde S_N(t) - \tilde S_N(0)\right)^4\right)\nonumber \\
    &&=  \la_N^{-\sqrt N}    \sum_{s} h_N(s)
       \mathbb E^N_{s}\left( e^{-\frac 1{2\sqrt N} \sum_{x=0}^{[N^{1/2}t]} \frac {s_x^2}{N^{1/2}}}
        \left(\tilde S_N(t) - s N^{-1/4}\right)^4 h_N(s_{[N^{1/2}t]})\right) \nonumber
         \\ &&
              \le   \la_N^{-\sqrt N}
              \sum_{s} h_N(s) \mathbb E^N_{s}\left(\left(\tilde S_N(t) - s N^{-1/4}\right)^4
               h_N(s_{[N^{1/2}t]})\right) \nonumber
                \\&&
         \le C \la_N^{-\sqrt N}    \sum_{s,s'} h_N(s) h_N(s')
         \pi_{[N^{1/2}t]} (s-s') 
         \left|\frac{s-s'}{t^{1/2}N^{1/4}}\right|^4 t^2
  \end{eqnarray}
  where  $\pi_n(s) = \pi\left(\sum_{k=1}^{n} \eta_k = s\right) $. By Proposition 2.4.6 in \cite{Lawler}, if $\pi$ is aperiodic with finite 4th moments, as in our case, we have the bound
  \begin{equation}
    \label{eq:12}
    \pi_n(s) \le \frac{C}{n^{1/2}} \left(\frac{\sqrt{n}}{|s|}\right)^4, \qquad \forall s\in\mathbb Z.
  \end{equation}
 From this estimate it follows that the right hand side of \eqref{eq:4mom} is bounded by
  \begin{eqnarray*}
        \\&& \le  t^2 C' \la_N^{-\sqrt N}    \sum_{s,s'} h_N(s) h_N(s')   \frac{1}{\sqrt{ t N^{1/2}}}
         = C' t^{3/2} \la_N^{-\sqrt N} N^{-1/4}  \left(\sum_{s} h_N(s)\right)^2,
  \end{eqnarray*}
   By \eqref{eq:17} we have that $\sum_{s} h_N(s) \le N^{1/8}$, and the bound follows.
\end{proof}

 {\bf Acknowledgments.}
We thank S. Shlosman for helpful discussions. 
A.DM thanks very warm hospitality  at the University of Paris-Dauphine where part of this work was performed.
This work was partially supported by ANR-15-CE40-0020-01 grant LSD.

\end{document}